\newtheorem{theorem}{Theorem}[section]
\newtheorem{corollary}[theorem]{Corollary}
\newtheorem*{corollary*}{Corollary}
\newtheorem{lemma}[theorem]{Lemma}
\newtheorem*{lemma*}{Lemma}
\newtheorem{proposition}[theorem]{Proposition}
\newtheorem*{proposition*}{Proposition}
\newtheorem*{theorem*}{Theorem}
\theoremstyle{definition}
\newtheorem{definition}[theorem]{Definition}
\numberwithin{equation}{section}
\newcommand{\bb}[1]{\text{$\mathbb{#1}$}}
\newcommand{\C}{\mathbb{C}}
\newcommand{\cal}[1]{\text{$\mathcal{#1}$}}
\DeclareMathOperator{\ch}{ch}
\renewcommand{\epsilon}{\varepsilon}
\DeclareMathOperator{\ev}{ev}
\newcommand{\hlie}[1]{\text{$\hat{\mathfrak{#1}}$}}
\DeclareMathOperator{\lng}{lg}
\newcommand{\lie}[1]{\text{$\mathfrak{#1}$}}
\DeclareMathOperator{\sh}{sh}
\DeclareMathOperator{\vspan}{span}
\newcommand{\tlie}[1]{\text{$\tilde{\mathfrak{#1}}$}}
\DeclareMathOperator{\wt}{wt}
\begin{document}

\title{Demazure and local Weyl modules for \\ twisted hyper current algebras}
\author{A. Bianchi}
\author{T. Macedo}
\address{Department of Science and Technology\\
         Federal University of S\~ao Paulo\\
         S\~ao Jos\'e dos Campos, S\~ao Paulo, Brazil, 12.247-014}
\email{acbianchi@unifesp.br, tmacedo@unifesp.br}
\thanks{Partially supported by the FAPESP grant 2015/22040-0 (A.B) and the CNPq grant 462315/2014-2 (A.B. and T.M.)}

\begin{abstract}
In this paper, we study local graded Weyl modules and Demazure modules for twisted hyper current algebras.  We prove that local graded Weyl modules for a twisted hyper current algebra are isomorphic to the corresponding level 1 Demazure modules, and moreover, that they are restrictions of corresponding local graded Weyl modules for the untwisted hyper current algebra.
\end{abstract}

\maketitle

\section*{Introduction}

The main goal of this paper is to generalize some results relating local Weyl and Demazure modules for twisted current algebras, which are known to hold over an algebraically closed field of characteristic zero, to the positive characteristic setting.  Namely, in \cite[Theorem 2.i]{CFS08}, it was proved that any local Weyl module for a twisted affine algebra is isomorphic to a certain restriction of a local Weyl module for the corresponding untwisted affine algebra.  In \cite[Theorem 6.0.2]{FK13}, it was proved that any graded local Weyl module for a twisted current algebra is isomorphic to a certain graded restriction of a graded local Weyl module for the corresponding untwisted current algebra.  It is interesting to notice that these results were also obtained by \cite[Lemma 3.8]{FKKS12} in a more general framework of equivariant map algebras.

In the simply laced case, it was proved in \cite[Proposition 2]{foli:weyldem} that any local Weyl module for a current algebra is isomorphic to a certain automorphic image of a local Weyl module for the corresponding loop algebra.  In \cite{CP01}, for $\lie{sl}_2$, and in \cite{CL06}, for type $A$, and \cite[Theorem 7]{foli:weyldem}, for types ADE, both using the $\lie{sl}_2$-case, it was proved that any local graded Weyl module for a current algebra is isomorphic to a certain restriction of a Demazure module for the corresponding affine algebra.  This latter result was shown to be false in the non simply laced case.  In fact, it was proved that local graded Weyl modules for current algebras admit Demazure flags in \cite[Proposition 4.18]{naoi:weyldem}.  Finally, it was proved in \cite[Theorem 5.0.2]{FK13} that, under some somewhat restrictive conditions, any twisted graded local Weyl module for a current algebra is isomorphic to a certain twisted Demazure module.

Some positive characteristic generalizations of the results mentioned above are already known.  For instance, it was proved in \cite[Theorem 4.1]{BM14} that any local Weyl module for a twisted hyper loop algebra is isomorphic to a certain restriction of a local Weyl module for the corresponding untwisted hyper loop algebra. In the simply laced case, it was proved in \cite[Theorem 1.5.2]{BMM15} that any local Weyl module for a hyper current algebra is isomorphic to a certain automorphic image of a local Weyl module for the corresponding hyper loop algebras, and that any local graded Weyl module for a current algebra is isomorphic to a certain restriction of a Demazure module for the corresponding affine algebra.

It is important to remark that, in the non simply laced case, it was proved in \cite[Theorem 1.5.2.b]{BMM15} that any local Weyl module for a hyper current algebra admits a Demazure flag.  Since Dynkin diagrams associated to non simply laced simple Lie algebras admit only the trivial automorphism, the following   claim is an immediate consequence of \cite[Theorem 1.5.2]{BMM15}:  in the non simply laced case, every local graded Weyl module for a twisted hyper current algebra admits a Demazure flag, and moreover, it is isomorphic to a certain local Weyl module for the twisted hyperloop algebra.

Hence, our main goal in this paper is to prove the following result.
\begin{theorem*}
The local graded Weyl module of highest weight $\lambda$ for a twisted hyper current algebra is isomorphic to the corresponding level 1 Demazure module.  And, moreover, it is a restriction of the corresponding local graded Weyl module of highest weight $\lambda$ for the untwisted hyper current algebra.
\end{theorem*}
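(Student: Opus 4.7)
The plan is to first reduce to the essential cases. As already noted in the excerpt, when the underlying simple Lie algebra is non simply laced, only the trivial diagram automorphism exists, so the twisted setting coincides with the untwisted one and both assertions follow directly from \cite[Theorem 1.5.2]{BMM15}. Throughout the rest of the argument I would therefore assume the simply laced setting equipped with a non-trivial diagram automorphism $\sigma$.

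I would establish the restriction half first, and deduce the Demazure half from it. For the restriction half, my approach parallels that of \cite[Theorem 4.1]{BM14}, which treats the analogous statement for twisted hyper loop algebras. The twisted hyper current algebra arises as the fixed subalgebra of the untwisted hyper current algebra under a lift of $\sigma$ to the Kostant $\mathbb{Z}$-form. Using this, I would first verify that the untwisted local graded Weyl module $W(\lambda)$, restricted along the inclusion of the twisted hyper current algebra, satisfies the defining relations of the twisted local graded Weyl module $W^\sigma(\lambda)$ of the same highest weight, giving a surjection $W^\sigma(\lambda) \twoheadrightarrow \mathrm{res}\, W(\lambda)$. The reverse direction is the substantive step: I would either compare graded characters via the integral forms, or construct a reverse surjection by applying the universal property of $W(\lambda)$ to a $\sigma$-invariant generator of $W^\sigma(\lambda)$ lifted to the untwisted setting.

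Once the restriction half is in place, the Demazure half follows from a three-step chain of isomorphisms: $W^\sigma(\lambda) \cong \mathrm{res}\, W(\lambda)$ from the restriction step; $W(\lambda) \cong D(1,\lambda)$ from \cite[Theorem 1.5.2]{BMM15} in the untwisted simply laced case; and $\mathrm{res}\, D(1,\lambda) \cong D^\sigma(1,\lambda)$, which is the hyperalgebra analogue of a standard fact about affine Demazure modules, namely that level 1 Demazure modules for a twisted affine Kac-Moody algebra realize as restrictions of level 1 Demazure modules for the corresponding untwisted simply laced affine Kac-Moody algebra.

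The main obstacle, as in \cite{BM14} and \cite{BMM15}, is the compatibility of $\sigma$ with the Kostant $\mathbb{Z}$-form and with reduction modulo $p$. One must exhibit $\sigma$-stable integral forms of $W(\lambda)$ and $D(1,\lambda)$ so that taking fixed points commutes with base change to the prime field, with no characteristic-specific pathologies creeping in. This requires a careful choice of Chevalley basis adapted to $\sigma$ together with a PBW-type filtration argument that tracks the action of the divided power generators of the twisted hyperalgebra. Once these technical compatibilities are in place, the desired isomorphisms transfer from characteristic zero, where they are known by \cite{FK13}, to positive characteristic.
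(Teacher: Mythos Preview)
Your approach inverts the paper's logical order, and the inversion introduces real gaps. The paper establishes the Demazure isomorphism \emph{first} and \emph{directly} (Theorem~\ref{t:wd}): since $D_\bb F^\sigma(1,\lambda)$ is by definition a quotient of $W_\bb F^{c,\sigma}(\lambda)$, one only needs to check that the Weyl generator $v$ already satisfies the additional Demazure relations. For each $\alpha\in R_0^+$ the paper passes to the subalgebra of Lemma~\ref{isos} isomorphic to $\lie{sl}_2[t]$ (or $\lie{sl}_2[t^m]$ when $\alpha$ is long), inside which $v$ generates a quotient of the rank-one graded Weyl module $W_\bb F^c(\lambda(h_{\alpha,0})\omega_1)$; the required vanishing $(x_{\alpha,-s}^-\otimes t^s)^{(k)}v=0$ for $k>\max\{0,\lambda(h_{\alpha,0})-\hat r^\vee_\alpha s\}$ then comes from the $\lie{sl}_2$ result, Lemma~\ref{l:nilsl2t0}. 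No restriction statement, no untwisted Weyl module, and no fixed-point versus base-change compatibility is invoked. The restriction result (Theorem~\ref{thm.2}) is handled afterward and separately, by verifying the twisted Weyl relations on the highest-weight vector of $W_\bb F^c(\lambda)$.

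Your plan instead aims for restriction first and then deduces the Demazure statement through the chain $W^\sigma(\lambda)\cong\mathrm{res}\,W(\lambda)\cong\mathrm{res}\,D(1,\lambda)\cong D^\sigma(1,\lambda)$. Two of these links are unsubstantiated. The ``reverse direction'' of the restriction step is only gestured at: comparing graded characters presupposes you already know $\ch W^\sigma(\lambda)$, and the universal property of the untwisted $W(\lambda)$ produces quotients \emph{of} $W(\lambda)$, not maps \emph{into} it, so neither mechanism you propose yields the needed bound $\dim W^\sigma(\lambda)\le\dim W(\lambda)$. In practice the only handle on $\dim W^\sigma(\lambda)$ available at that point is precisely the Demazure identification you are trying to derive, so the argument becomes circular. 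Second, the final link $\mathrm{res}\,D(1,\lambda)\cong D^\sigma(1,\lambda)$ for hyperalgebras over $\bb F$ is not a result you can cite; establishing it in positive characteristic would itself require essentially the $\lie{sl}_2$-reduction that the paper uses directly, so the detour through untwisted Demazure modules buys nothing. The paper's route---rank-one reduction for the Demazure half, then a relation check for the restriction half---is both shorter and avoids the $\sigma$-stable integral form and base-change machinery you anticipate needing.
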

The proof of the first part is given in Theorem~\ref{t:wd} and the proof of the second part is given in Theorem~\ref{thm.2}.  Along the way, we also prove some results about local graded Weyl modules and Demazure modules for twisted hyper current algebras.  For instance, in Lemma~\ref{lem:WZint}, we prove that integral local graded Weyl modules are integrable modules, and in Proposition~\ref{prop:WZfg}, we prove that they are finitely generated.  Also, in Proposition~\ref{p:Demrels}, we prove that the purely algebraic definition of a Demazure module given here reflects the usual geometric definition of Demazure modules.

In terms of equivariant map algebras, $\lie g [t]^\sigma$ can be regarded as the Lie algebra of equivariant regular maps $\varphi \colon \bb C \to \lie g$ under the cyclic group generated by the automorphism $\sigma$. Here, the cyclic group generated by $\sigma$ acts on \lie g as in Section \ref{s:simple} and on $\bb C$ via multiplication by roots of unity. Observe that this latter action is not free, since $0 \in \bb C$ is a fixed point. Thus, in the current paper, we are dealing with a case that is not covered by \cite{FKKS12}.  Moreover, we generalize some of the results in \cite{FKKS12} to fields of positive characteristic (different from 2 in case $\lie g \cong A_{2n}$).

\subsection*{Notation} Denote by $\mathbb C, \mathbb Z, \mathbb Z_{\ge0}$ and $\mathbb  N$ the set of complex numbers, the set of integers, non–negative integers and positive integers, respectively. Fix an indeterminate $t$ and let $\mathbb C[t]$ (resp. $\mathbb C[t, t^{-1}]$) be the corresponding polynomial ring (resp. Laurent polynomial ring) with complex coefficients.  Throughout this paper, unless we explicitly state otherwise, all vector spaces and tensor products are assumed to be taken over $\mathbb C$.

\section{Preliminaries on algebras}\label{s:algs}

\subsection{Simple Lie algebras and diagram automorphisms}\label{s:simple}

Let $I$ be the set of vertices of a finite-type connected Dynkin diagram and let $\lie g$ be the associated finite-dimensional simple Lie algebra over $\mathbb C$. Fix a Cartan subalgebra $\lie h \subseteq \lie g$, a triangular decomposition $\lie g=\lie n^-\oplus\lie h\oplus\lie n^+$, and denote by $R$ (resp. $R^+$) the associated root system (resp. set of positive roots). Let $\{\alpha_i \mid i\in I \}$ (resp. $\{ \omega_i \mid i\in I \}$) denote the set of simple roots (resp. fundamental weights), and let $Q=\bigoplus_{i\in I} \mathbb Z \alpha_i$, $Q^+=\bigoplus_{i\in I} \mathbb Z_{\ge0} \alpha_i$, $P=\bigoplus_{i\in I} \mathbb Z \omega_i$, $P^+=\bigoplus_{i\in I} \mathbb Z_{\ge0} \omega_i$.  Denote by $\cal W$ the Weyl group of $\lie g$.  Let $\lie g_\alpha$ denote the root space of $\lie g$ associated to $\alpha \in R$ and $\theta$ denote the highest root of $\lie g$.

Fix a Chevalley basis $\cal C=\{x_\alpha^\pm, h_i \mid \alpha\in R^+, \, i\in I\}$ of $\lie g$ and, for each $\alpha \in R^+$, let $h_\alpha = [x^+_\alpha, x^-_\alpha]$ (in particular, $h_{\alpha_i} = h_i$ for all $i \in I$).  There exists a unique bilinear form $(\ ,\ )$ on $\lie g$ that is symmetric, invariant, nondegenerate, and such that $(h_\theta,h_\theta)=2$.  Let $\nu \colon \lie h \to \lie h^*$ be the linear isomorphism induced by $(\ ,\ )$ and keep denoting by $(\ ,\ )$ the nondegenerate bilinear form induced by $\nu$ on $\lie h^*$.  Denote by $r^\vee \in \{1,2,3\}$ the lacing number of $\lie g$, and let
\begin{equation}
r^\vee_\alpha = \frac{2}{(\alpha,\alpha)} = \begin{cases}
1, &\text{if $\alpha$ is long},\\
r^\vee, &\text{if $\alpha$ is short.}
\end{cases}
\end{equation}

Fix an automorphism $\sigma$ of the Dynkin diagram of $\lie g$ and let $m\in\{1,2,3\}$ denote the order of $\sigma$.  There exists a unique Lie algebra automorphism of $\lie g$ (which we keep denoting by $\sigma$) that satisfies $\sigma(x^\pm_i)=x^\pm_{\sigma(i)}$ for all $i\in I$.  Define a permutation of $R$ (which we also denote by $\sigma$) by
\[
\sigma \left( \sum_{i \in I} n_i \alpha_i \right)
:= \sum_{i \in I} n_i \alpha_{\sigma(i)}.
\]
For each $\alpha\in R^+$, define $m_\alpha := \# \left\{\sigma^j(\alpha) \mid j\in \bb N \right\}$.  Observe that $m_\alpha\in\{1,m\}$ and $\sigma(\lie g_\alpha)=\lie g_{\sigma(\alpha)}$ for all $\alpha\in R$.

Fix a primitive $m$-th root of unity $\zeta$, and let $\lie g_\epsilon = \{x\in\lie g \mid \sigma(x)=\zeta^\epsilon x\}$ for each $\epsilon \in \{0, \dotsc, m-1\}$.  Since every finite order automorphism is diagonalizable over $\mathbb C$ (with eigenvalues being roots of unity), we have
\begin{equation*}
\lie g=\bigoplus_{\epsilon=0}^{m-1}\lie g_\epsilon \quad\text{and}\quad [\lie g_\epsilon, \lie g_{\epsilon'}]\subseteq \lie g_{\overline{\epsilon+\epsilon'}},
\end{equation*}
where $\overline{\epsilon+\epsilon'}$ is the remainder of the division of $\epsilon+\epsilon'$ by $m$.  (Henceforth, we will abuse notation and write $\epsilon+\epsilon'$ instead of $\overline{\epsilon+\epsilon'}$.)  In particular, $\lie g_0$ is a subalgebra of $\lie g$ and $\lie g_\epsilon$ is a $\lie g_0$-module for all $\epsilon \in \{1, \dotsc, m-1\}$.  Moreover, it is known that $\lie g_0$ is a finite-dimensional simple Lie algebra and $\lie g_\epsilon$ is a finite-dimensional irreducible $\lie g_0$-module for all $\epsilon \in \{0, \dotsc, m-1\}$ (see \cite[Chapter 8]{Kac90}).

If $\lie a$ is a subalgebra of $\lie g$ and $\epsilon \in \{0, \dotsc, m-1\}$, let $\lie a_\epsilon:=\lie a\cap\lie g_\epsilon$. It is known that $\lie h_0$ is a Cartan subalgebra of $\lie g_0$ and $\lie g_0= \lie n^+_0 \oplus \lie h_0 \oplus \lie n^-_0$ is a triangular decomposition. Let $R_0$ denote the root system determined by $\lie h_0$, and $I_0$ be an indexing set for the associated simple roots.  The simple roots (resp. fundamental weights) of $\lie g_0$ determined by the triangular decomposition $\lie g_0= \lie n^+_0 \oplus \lie h_0 \oplus \lie n^-_0$ will be denoted by $\alpha_i$, $i\in I_0$ (resp. $\omega_i$, $i\in I_0$), as this should cause no confusion.  Let $\theta_0$ denote the highest root of $\lie g_0$, $Q_0 = \bigoplus_{i\in I_0} \mathbb Z \alpha_i$, $Q_0^+=\bigoplus_{i\in I_0} \mathbb Z_{\ge0} \alpha_i$, $P_0=\bigoplus_{i\in I_0} \mathbb Z \omega_i$, $P_0^+=\bigoplus_{i\in I_0} \mathbb Z_{\ge0} \omega_i$.  Let $R_{\sh}$ (resp. $R_{\lng}$) be the subset of $R_0^+$ corresponding to short (resp. long) positive roots of $\lie g_0$, and $\cal W_0$ be the Weyl group of $\lie g_0$.

For each $\lie g$-module (resp. of $\lie g_0$-module) $V$, denote by $V_\lambda$, $\lambda \in \lie h^*$ (resp. $\lambda \in \lie h_0^*$), its weight space of weight $\lambda$.  Let $\wt(V)$ (resp. $\wt_0(V)$) denote the set of weights of $V$ as an $\lie h$-module (resp. $\lie h_0$-module).  For each $\epsilon \in \{0, \dotsc, m-1 \}$, denote by $R_\epsilon$ the set of weights of $\lie g_\epsilon$ as a $\lie g_0$-module (via the adjoint representation).  It is known that $R_\epsilon \subseteq Q_0$, so let $R_\epsilon^+ = (R_\epsilon \cap Q_0^+) \setminus \{0\}$.  Finally, denote by $\theta_1$ the highest weight of $\lie g_1$ (as a $\lie g_0$-module).

\subsection{Chevalley basis of $\lie g_0$} \label{s:chevalley}

Given $\epsilon \in \{0, \dotsc, m-1\}$ and $\mu \in R_\epsilon \setminus \{0\}$, it is known that: $\mu=\alpha|_{\lie h_0}$ for some $\alpha\in R$, and $\alpha|_{\lie h_0}=\beta|_{\lie h_0}$ if, and only if, $\beta=\sigma^j(\alpha)$ for some $j \in \{0, \dotsc, m_\alpha-1\}$.

Suppose $\lie g$ is not of type $A_{2n}$. Given $\alpha\in R^+$ and $\epsilon \in \{0, \dotsc, m_\alpha-1\}$, let
\[
x_{\alpha,\epsilon}^\pm =
\sum_{j=0}^{m_\alpha-1} \zeta^{j\epsilon}x^\pm_{\sigma^j(\alpha)}
\qquad\textup{and}\qquad
h_{\alpha,\epsilon} =
\sum_{j= 0}^{m_\alpha-1} \zeta^{j\epsilon}h_{\sigma^j(\alpha)}.
\]
When $\epsilon \ge m_\alpha$, let $x^\pm_{\alpha, \epsilon} = h_{\alpha, \epsilon} = 0$.  Now, suppose $\lie g$ is of type $A_{2n}$.  Given $\alpha\in R^+$ and $\epsilon \in \{0, 1\}$, let
\[
x_{\alpha,\epsilon}^\pm :=
\begin{cases}
\delta_{1,\epsilon}x_\alpha^\pm,
& \quad\text{if } \alpha=\sigma(\alpha),\\
x_\alpha^\pm+(-1)^\epsilon x_{\sigma(\alpha)}^\pm,
& \quad\text{if } \alpha\ne\sigma(\alpha), \alpha|_{\lie h_0}\in R_{\lng},\\
\sqrt 2\left( x_\alpha^\pm+ (-1)^\epsilon x_{\sigma(\alpha)}^\pm\right),
& \quad\text{if } \alpha\ne\sigma(\alpha), \alpha|_{\lie h_0}\in R_{\sh},
\end{cases}
\]
and
\[
h_{\alpha,\epsilon} :=
\begin{cases}
\delta_{0,\epsilon}h_\alpha,
& \quad\text{if } \alpha=\sigma(\alpha),\\
h_\alpha+(-1)^\epsilon h_{\sigma(\alpha)},
& \quad\text{if }  \alpha\ne\sigma(\alpha), \alpha|_{\lie h_0}\in R_{\lng},\\
2 \left(h_\alpha+(-1)^\epsilon  h_{\sigma(\alpha)}\right),
& \quad\text{if }  \alpha\ne\sigma(\alpha), \alpha|_{\lie h_0}\in R_{\sh}.
\end{cases}
\]
Moreover, if $\lie g$ is of type $A_{2n}$ and $\beta+\sigma(\beta)\in R^+$, then $\beta|_{\lie h_0} \in R_{\sh}$ and $(\beta + \sigma(\beta))|_{\lie h_0} \in 2R_{\sh}$.  In this case, we have
\begin{equation}\label{e:x2Rs}
x_{\beta+\sigma(\beta),1}^\pm
= \frac{s}{4}[x_{\beta,0}^\pm,x_{\beta,1}^\pm]
= -s[ x_\beta^\pm,x_{\sigma(\beta)}^\pm]
\quad \textup{ for some $s \in \{ -1, 1\}$}.
\end{equation}
Since $\lie g_\epsilon$ is a finite-dimensional irreducible $\lie g_0$-module, for all $\epsilon \in \{0, \dotsc, m-1\}$ and $\mu \in R_\epsilon \setminus \{0\}$, we have
\[
\lie h_\epsilon = \vspan_\bb C \{ h_{\alpha_i,\epsilon} \mid i\in I\}
\qquad \textup{and} \qquad
(\lie g_\epsilon)_{\pm\mu} = \mathbb Cx_{\alpha,\epsilon}^\pm,
\quad \textup{where} \ \alpha|_{\lie h_0} = \mu.
\]

Choose a complete set of representatives $O$ of the orbits of the action of $\sigma$ on $R^+$.  For each $\mu \in R^+_\epsilon$, let $x_{\mu,\epsilon}^\pm = x_{\alpha,\epsilon}^\pm$, where $\alpha \in O$ is such that $\alpha|_{\lie h_0}=\mu$.  Also, notice that there exists a unique injective map $o \colon I_0\to I$ such that $\alpha_{o(i)}\in O$ for all $i\in I_0$, and let $h_{i,\epsilon} = h_{o(i),\epsilon}$.  Thus
\begin{equation}\label{eq:chev.basis.g}
\cal C^\sigma(O)
:= \left\{ x_{\mu,\epsilon}^\pm, \, h_{i,\epsilon} \mid \epsilon \in \{0, \dotsc, m-1\},\,  \mu \in R^+_\epsilon,\, i\in I_0 \right\} \setminus\{ 0 \}
\end{equation}
is a basis of $\lie g$.

For notational convenience, if $\alpha\in O$ is such that $\mu=\alpha|_{\lie h_0}$, let $h_{\mu,\epsilon}=h_{\alpha,\epsilon}$, and if $\mu \notin R_\epsilon$, let $x_{\mu,\epsilon}^\pm=0$.  Also, if $\lie g$ is of type $A_{2n}$, assume that $O$ is chosen in such a way that $s=1$ in \eqref{e:x2Rs}.  Moreover, if $\lie g$ is of type $D_4$ and $m=3$, let $j\in I$ be the unique vertex fixed by $\sigma$, choose $i\in I$ such that $\sigma(i)\ne i$, and let $O = \{\alpha\in R^+ \mid \sigma(\alpha)=\alpha\}\cup\{\alpha_i, \, \alpha_j+\alpha_i,\, \alpha_j+\sigma(\alpha_i)+\sigma^2(\alpha_i)\}$.  It has been proven by Kac that the set $\cal C^\sigma_0(O) := \{x_{\mu,0}^\pm, h_{i,0} \mid \mu\in R_0^+, i\in I_0\}$ is a Chevalley basis of $\lie g_0$ (see \cite[\S8.3]{Kac90}).

Later in this paper, we shall need the following formulas of commutators of elements of $\cal C^\sigma(O)$, proved in \cite[Lemma 2.2.7]{bianchi12}.  For all $\epsilon,\epsilon' \in \{0, \dotsc, m-1\}$, $\mu \in R_0^+$, $\nu \in R^+_\epsilon$, $\eta \in R^+_\epsilon \cap R^+_{\epsilon'}$:
\begin{align*}
[h_{\mu,0}, x^\pm_{\nu,\epsilon}] 
&= \pm \nu(h_{\mu,0}) x^\pm_{\nu,\epsilon}; \\
\textup{if $h_{\nu,1}\ne0$, then} \quad [h_{\nu,1},x_{\nu,\epsilon}^\pm] 
&= \begin{cases}
\pm 3 x^\pm_{\nu,\epsilon+1},
&\text{ if } \nu \in R_{\sh} \text{ and } \lie g \text{ is of type } A_{2n}, \\
\pm 2 x^\pm_{\nu,\epsilon+1},
& \text{ otherwise;} \end{cases}\\
[x_{\eta,\epsilon}^+, x_{\eta,\epsilon'}^-]
&= \begin{cases}
\delta_{\epsilon\epsilon',1}h_{\frac{\eta}{2},0},
& \textup{if $\eta \in 2R_{\sh}$ and $\lie g$ is of type $A_{2n}$}, \\
h_{\eta, \epsilon+\epsilon'},
& \textup{otherwise}.
\end{cases}
\end{align*}

\subsection{Current, loop and affine Kac-Moody algebras}

Given a vector space $\lie a$, consider $\tlie a = \lie a \otimes \C[t, t^{-1}]$ and $\lie a[t] = \lie a \otimes \C [t]$.  If $(\lie a, [ \cdot, \cdot ]_{\lie a})$ is a Lie algebra, then there exists a unique bilinear map $[\cdot, \cdot] \colon \tlie a \times \tlie a \to \tlie a$ that satisfies
\[
[x \otimes f, \, y \otimes g] = [x, y]_{\lie a} \otimes fg,
\quad \textup{for all $x, y \in \lie a$, $f, g \in \C[t, t^{-1}]$},
\]
and endows $\tlie a$ with a structure of Lie algebra.  Notice that $\lie a \otimes \C$ is a subalgebra of $\lie a[t]$ isomorphic to $\lie a$, that $\lie a[t]$ is a subalgebra of $\tlie a$, and that $\lie b[t]$ (resp. $\tlie b$) is a  subalgebra of $\lie a[t]$ (resp. $\tlie a$) for all subalgebras $\lie b \subseteq \lie a$.

Fix an automorphism $\sigma \colon \lie a \to \lie a$.  Denote the order of $\sigma$ by $m$ and an $m$-th primitive root of unity by $\zeta$.  There exists a unique automorphism $\tilde\sigma \colon \tlie a \to \tlie a$ satisfying
\[
\tilde\sigma(x \otimes t^j) = \zeta^j \sigma(x) \otimes t^j
\qquad \textup{for all $x \in \lie a$ and $j \in \bb Z$}.
\]
Notice that $\tilde\sigma$ restricts to an automorphism of $\lie a[t]$ and that the order of $\tilde\sigma$ is also $m$.  Let $\tlie a^\sigma$ and $\lie a[t]^\sigma$ denote the subalgebras of $\tilde\sigma$-fixed points; explicitly
\[
\tlie a^\sigma = \bigoplus_{\epsilon = 0}^{m-1} \tlie a^\sigma_\epsilon, \quad
\tlie a^\sigma_\epsilon = \lie a_\epsilon \otimes t^{m-\epsilon}\C[t^m, t^{-m}],
\qquad \textup{and} \qquad
\lie a[t]^\sigma = \bigoplus_{\epsilon = 0}^{m-1} \lie a[t]^\sigma_\epsilon, \quad
\lie a[t]^\sigma_\epsilon = \lie a_\epsilon \otimes t^{m-\epsilon}\C[t^m].
\]
In particular, when $\sigma = {\rm id}_{\lie a}$, notice that $\lie a[t]^\sigma = \lie a[t]$ and $\tlie a^\sigma = \tlie a$.

Given a finite-dimensional complex simple Lie algebra $\lie g$ and a Dynkin diagram automorphism $\sigma \colon \lie g \to \lie g$, the associated twisted affine Kac-Moody algebra $\hlie g$ is the 2-dimensional extension of $\tlie g^\sigma$ given by $\hlie g:=\tlie g^\sigma \oplus \C c \oplus \C d$, and endowed with the unique Lie bracket that satisfies
\[
[x \otimes t^r, y \otimes t^s] = [x, y] \otimes t^{r+s} + r\ \delta_{r,-s}\ (x, y)\ c, \qquad
[c, \hlie g]=0 \qquad \textup{and} \qquad
[d, x\otimes t^r]=r\ x\otimes t^r
\]
for all $x \otimes t^r, y \otimes t^s \in \tlie g^\sigma$. Denote by $\hlie g' = [\hlie g, \hlie g]$ the derived subalgebra of $\hlie g$, notice that $\hlie g' = \tlie g^\sigma \oplus \C c$ and that we have a nonsplit short exact sequence of Lie algebras $0 \to \C c \to \hlie g' \to \tlie g^\sigma \to 0$.

Given a triangular decomposition $\lie g = \lie n^- \oplus \lie h \oplus \lie n^+$ and a Chevalley basis $\cal C = \{ x_i^\pm, h_\alpha \mid i \in I, \, \alpha \in R^+ \}$, denote $(\lie h \oplus \C c)$ by $\lie h'$, $(\lie g \otimes t^{\pm1} \C[t^{\pm1}])$ by $\lie g[t]_{\pm}$, and let
\[
\hlie h = \lie h \oplus \bb C c \oplus \bb C d, \qquad
\hlie n^{\pm} = \lie n^{\pm} \oplus \lie g [t]_{\pm}, \qquad
\hlie b^{\pm} = \hlie n^{\pm} \oplus \hlie h.
\]
A triangular decomposition of $\hlie g$ is given by $\hlie g = \hlie n^- \oplus\hlie h \oplus \hlie n^+$.  The root system, set of positive and simple roots associated to this triangular decomposition will be denoted by $\hat{R}$, $\hat R^+$ and $\hat{\Delta}$ respectively.  Notice that
\begin{equation}\label{e:rootlevel}
\alpha(c) = 0 \qquad \textup{for all}\quad \alpha\in\hat R.
\end{equation}
Now, let $\hat I=I\sqcup\{0\}$, $a_0 = 2$ for $\lie g \cong A_{2n}$, $a_0 = 1$ for $\lie g \not\cong A_{2n}$, $h_0 = \frac{m}{a_0}c - h_{\theta_1}$, and notice that $\{h_i \mid i\in\hat I\}\cup\{d\}$ is a basis of $\hlie h$. Identify $\lie h^*$ with the subspace $\{\lambda\in\hlie h^* \mid \lambda(c)=\lambda(d)=0\}$. Let $\delta$ be the unique linear map in $\hlie h^*$ such that $\delta(d)=1$ and $\delta(h_i)=0$ for all $i\in\hat I$. Define $\alpha_0 = \delta - \theta_1$. Then
\begin{gather*}
\hat\Delta=\{\alpha_i \mid i\in\hat I\}, \quad
\hat R^+ = R^+\cup\{\alpha+r\delta \mid r\in \bb Z_{>0}, \, \alpha \in R_r \cup\{0\}\}, \\
\hlie g_{\alpha + r \delta} = (\lie g_r)_\alpha \otimes t^r, \ 
\textup{if $r \in \bb Z, \, \alpha \in R_r$},
\quad \textup{and} \quad 
\hlie g_{r \delta} = \lie h \otimes t^r, \
\textup{if $r \in \bb Z \setminus \{0\}$}.
\end{gather*}
Given $\alpha\in R^+$, $r\in\bb Z_{\ge0}$, let
\begin{gather*}
x_{\pm\alpha+r\delta}^+ = x_{\alpha, -r}^\pm \otimes t^r, \qquad
x_{\pm\alpha+r\delta}^- = x_{\alpha, r}^\mp \otimes t^{-r}, \qquad
\hat r^\vee_\alpha = m r^\vee_\alpha / a_0, \\
h_{\pm\alpha+r\delta} =  [x_{\pm\alpha+r\delta}^+,x_{\pm\alpha-r\delta}^-]
= \begin{cases}
\delta_{r,1} \, h_{\frac \alpha 2, 0} + r \hat r^\vee_\alpha c, & \textup{if $\lie g \cong A_{2n}$ and $\alpha \in 2R_{\sh}$}, \\
\pm h_{\alpha,0} + r \hat r^\vee_\alpha c, & \textup{otherwise}.
\end{cases}
\end{gather*}

For each $i\in\hat I$, define $\Lambda_i$ to be the unique linear map in $\hlie h^*$ that satisfies $\Lambda_i(d)=0$ and $\Lambda_i(h_j)=\delta_{ij}$ for all $i,j\in\hat I$.  Set $\hat P = \mathbb Z\delta \oplus \left( \oplus_{i\in\hat I}\mathbb Z\Lambda_i \right)$, $\hat P^+ = \mathbb Z\delta \oplus \left( \oplus_{i\in\hat I} \mathbb N\Lambda_i \right)$, $\hat P'=\oplus_{i\in\hat I} \mathbb Z\Lambda_i$, and $\hat P'^+=\hat P'\cap\hat P^+$. Notice that
\[ \Lambda_0 (h) = 0 \quad \text{iff} \quad h \in \lie h \oplus \bb C d
\qquad \text{and} \qquad
\Lambda_i - \omega_i = \omega_i(h_\theta)\Lambda_0 \quad\text{for all}\ i \in I.\]
Hence, $\hat{P} = \bb Z \Lambda_0 \oplus P \oplus \bb Z \delta$. Given $\Lambda\in\hat P$, the number $\Lambda(c)$ is called the level of $\Lambda$. By \eqref{e:rootlevel}, the level of $\Lambda$ depends only on its class modulo the root lattice $\hat Q$. Set also $\hat Q^+ = \mathbb N \hat R^+$ and let
$\widehat{\mathcal W}$ denote the  affine Weyl group, which is generated by the simple reflections $s_i,i\in\hat I$.  Finally, observe that $\{\Lambda_0, \delta\}\cup\Delta$ is a basis of $\hlie h^\ast$.

Moreover, the set $\{ x_\alpha^\pm\otimes t^r, h_i\otimes t^r \mid \alpha\in R^+, \ i\in I, \ r\in\mathbb N\}$ forms a basis for $\lie g[t]$ and the set $\{ x_{\mu,-r}^\pm\otimes t^r, \, h_{i,-r}\otimes t^r \mid r \in \mathbb N, \, \mu \in R_{-r}, \, i\in I_0\} \setminus \{0\}$ forms a basis for $\lie g[t]^\sigma$.

\subsection{Integral forms and hyperalgebras}

Given a Lie algebra $\lie a$, let $U(\lie a)$ denote its universal enveloping algebra.  The Poincar\'e-Birkhoff-Witt (PBW) Theorem implies that there are isomorphisms of vector spaces
$$U(\lie g[t])\cong U(\lie n^-[t])\otimes U(\lie h[t])\otimes U(\lie n^+[t]) \quad\text{and}\quad U(\lie g[t]^\sigma)\cong U(\lie n^-[t]^\sigma)\otimes U(\lie h[t]^\sigma)\otimes U(\lie n^+[t]^\sigma).$$

Given $\alpha\in R^+$ and $k \in \mathbb Z_{\ge0}$, consider the following power series with coefficients in $U(\lie h[t])$
\begin{equation*}
\Lambda_{\alpha;k} (u)
= \sum_{r\ge 0}^{} \Lambda_{\alpha, r;k}u^r
= \exp\left(-\sum_{s>0}^{} \frac{h_{\alpha}\otimes t^{ sk}}{s}u^s\right).
\end{equation*}
For simplicity, denote $\Lambda_{\alpha, r; 0}$ by $\Lambda_{\alpha, r}$.  Similarly, consider the following power series with coefficients in $U(\lie h[t]^\sigma)$ (twisted analogues of the above elements). If either $\lie g$ is not of type $A_{2n}$ and $\mu \in R_{\sh}$, or $\lie g$ is of type $A_{2n}$ and $\mu\in R_0^+$, set
$$\Lambda_{\mu}^{\sigma}(u)  = \sum_{r=0}^\infty \Lambda_{\mu, r}^\sigma u^r =\text{exp} \left( - \sum_{k=1}^{\infty} \sum_{\epsilon=0}^{m-1}\frac{h_{\mu,\epsilon} \otimes t^{mk-\epsilon}}{mk-\epsilon} u^{mk-\epsilon}\right).$$
If $\lie g$ is not of type $A_{2n}$ and $\mu \in R_{\lng}$, set
$$\Lambda_{\mu}^{\sigma}(u)  = \sum_{r=0}^\infty \Lambda_{\mu,r}^\sigma u^r = \text{exp} \left( - \sum_{k=1}^{\infty} \frac{h_{\mu,0} \otimes t^{ mk}}{k} u^{k}\right). $$
For simplicity, denote $\Lambda_{\alpha_i,r}$ (resp. $\Lambda^\sigma_{\alpha_i,r}$) by $\Lambda_{i,r}$ (resp. $\Lambda^\sigma_{i,r}$).

One can easily check the following relation among twisted and non-twisted versions of the above elements. Given $\mu\in R_0^+$, let $\alpha\in O$ such that $\alpha|_{\lie h_0}=\mu$. Then:
\begin{equation}\label{e:tLvsL}
\Lambda_\mu^{\sigma,\pm}(u)=
\begin{cases} \prod_{j=0}^{m -1} \Lambda_{\sigma^j(\alpha)}^\pm(\zeta^{m-j}u), & \text{if}\quad  \Gamma_\alpha =m,  \\
\Lambda_{\alpha;m}^\pm(u),   & \text{if}\quad  \Gamma_\alpha = 1.  \end{cases}
\end{equation}

Given an order on the Chevalley basis of $\lie g[t]^\sigma$ and a PBW monomial with respect to this order, we construct an ordered monomial in the elements of the set
$$\cal M[t]^\sigma=\left\{(x^\pm_{\mu,-r}\otimes t^r)^{(k)}, \ \Lambda_{i, k}^{\sigma}, \ \binom{h_{i,0}}{k}  \mid  \mu\in\wt(\lie g_{-r})\cap Q_0^+\setminus\{0\}, i\in I_0, r\in \mathbb Z_{\ge0}, k\in \mathbb N\right\}\setminus\{0\},$$
using the correspondence $(x^\pm_{\mu,-r}\otimes t^r)^k \leftrightarrow (x^\pm_{\mu,-r}\otimes t^r)^{(k)}$, $h_{i,0}^k \leftrightarrow \binom{h_{i,0}}{k}$ and $(h_{i,-r}\otimes t^r)^k \leftrightarrow (\Lambda_{i,r}^\sigma)^k$ for $r\ne 0$.
Using the obvious similar correspondence we consider monomials in $U(\lie g)$ formed by elements of
$$\cal M=\left\{ (x_{\alpha}^\pm)^{(k)},\binom{h_i}{k}  \mid  \alpha\in R^+, i\in I, k\in\mathbb Z_{\ge0}\right\}$$
and in $U(\lie g[t])$ formed by elements of
$$\cal M[t]=\left\{(x^\pm_{\alpha}\otimes t^r)^{(k)}, \Lambda_{i, k},\binom{h_{i}}{k} \mid \alpha \in R^+, i\in I, k\in \mathbb N, r\in \mathbb Z_{\ge0}\right\}.$$
Notice that $\cal M$ can be naturally regarded as a subset of $\tilde{\cal M}$. The sets of ordered monomials thus obtained are basis of $U(\lie g[t]^\sigma)$, $U(\lie g)$, and $U(\lie g[t])$, respectively.

Let  $U_{\mathbb Z}(\lie g) \subseteq U(\lie g)$, $U_{\mathbb Z}(\lie g[t]) \subseteq U(\lie g[t])$, and $U_{\mathbb Z}(\lie g[t]^\sigma)\subseteq U(\lie g[t]^\sigma)$ be the $\mathbb Z$--subalgebras generated respectively by $\{(x_{\alpha}^\pm)^{(k)}  \mid  \alpha\in R^+, k\in\mathbb Z_{\ge0}\}$, $\{(x^\pm_{\alpha}\otimes t^r)^{(k)} \mid  \alpha \in R^+,r,k\in\mathbb Z_{\ge0}\}$, and $\{(x^\pm_{\mu,-r}\otimes t^r)^{(k)} \mid \mu\in\wt(\lie g_{-r})\cap Q_0^+\setminus\{0\}, r,k\in\mathbb Z_{\ge0}\}$.  It has been proven by Kostant \cite{kostant66} (in the $U(\lie g)$ case), Garland \cite{garland78} (in the $U(\tlie g)$ case), Mitzman \cite{mitzman85} and Prevost \cite{prevost92} (in the loop and twisted loop cases) that the subalgebras $U_{\mathbb Z}(\lie g[t]^\sigma)$, $U_{\mathbb Z}(\lie g)$, and $U_{\mathbb Z}(\lie g[t])$ are free $\mathbb Z$-modules and the sets of ordered monomials constructed from $\cal M[t]^\sigma$, $\cal M$, $\cal M[t]$ are $\mathbb Z$-basis of $U_{\mathbb Z}(\lie g[t]^\sigma)$, $U_{\mathbb Z}(\lie g)$, and  $U_{\mathbb Z}(\lie g[t])$, respectively.

In particular, it follows from this result that the natural map $\mathbb C\otimes_\mathbb ZU_\mathbb Z(\lie g[t]^\sigma)\to U(\lie g[t]^\sigma)$ is an isomorphism of vector spaces, i.e., $U_\mathbb Z(\lie g[t]^\sigma)$ is a an integral form of $U(\lie g[t]^\sigma)$, and similarly for $U_\mathbb Z(\lie g[t])$ and $U_\mathbb Z(\lie g)$. If $\lie a$ is a subalgebra of $\lie g$ preserved by $\sigma$, set
\begin{equation*}
U_\mathbb Z(\lie a[t]^\sigma) = U(\lie a[t]^\sigma)\cap U_\mathbb Z(\lie g[t]^\sigma)
\end{equation*}
and similarly define $U_\mathbb Z(\lie a)$ and $U_\mathbb Z(\lie a[t])$ for any $\lie a$ subalgebra of $\lie g$.  It also follows that, if $\lie a$ is either $\lie g_0$, $\lie n_0^\pm$, $\lie h_\epsilon$, $\lie h[t]^\sigma_\epsilon$,$\lie n^\pm[t]^\sigma$, $\lie n^\pm$, $\lie h$, $\lie n^\pm[t]$, or $\lie h[t]$, then $U_\mathbb Z(\lie a)$ is a free $\mathbb Z$-module spanned by monomials formed by elements of $\cal M[t]^\sigma$ belonging to $U(\lie a)$, and hence $\mathbb C\otimes_\mathbb Z U_\mathbb Z(\lie a) \cong U(\lie a)$.  Moreover, we have 
\begin{equation} \label{eq:Z.triang.dec}
U_\mathbb Z(\lie g[t]) = U_\mathbb Z(\lie n^-[t]) U_\mathbb Z(\lie h[t]) U_\mathbb Z(\lie n^+[t])
\quad \textup{and} \quad
U_\mathbb Z(\lie g[t]^\sigma) = U_\mathbb Z(\lie n^-[t]^\sigma) U_\mathbb Z(\lie h[t]^\sigma) U_\mathbb Z(\lie n^+[t]^\sigma).
\end{equation}

Given a field  $\mathbb F$, define the $\mathbb F$-hyperalgebra of $\lie a$ by
$$U_\mathbb F(\lie a) =  \mathbb F\otimes_{\mathbb Z}U_\mathbb Z(\lie a)$$
where $\lie a$ is any of the subspaces considered above. We will refer to $U_\mathbb F(\lie g[t]^\sigma)$ as the twisted hyper current algebra of $\lie g$ associated to $\sigma$ over $\mathbb F$.
Clearly, if the characteristic of $\mathbb F$ is zero, the algebra $U_\mathbb F(\lie g[t]^\sigma)$ is naturally isomorphic to $U(\lie g_\mathbb F[t]^\sigma)$ where $\lie g[t]_\mathbb F^\sigma=  \mathbb F\otimes_\mathbb Z\lie g[t]_\mathbb Z^\sigma$ and $\lie g[t]_\mathbb Z^\sigma$ is the $\mathbb Z$-span of the Chevalley basis of $\lie g[t]^\sigma$, and similarly for all algebras $\lie a$ we have considered. For fields of positive characteristic we just have an algebra homomorphism $U(\lie a_\mathbb F)\to U_\mathbb F(\lie a)$ which is neither injective nor surjective. We will keep denoting by $x$ the image of an element $x\in U_\mathbb Z(\lie a)$ in $U_\mathbb F(\lie a)$. Notice that we have
$$U_\mathbb F(\lie g[t])=U_\mathbb F(\lie n^-[t])U_\mathbb F(\lie h[t])U_\mathbb F(\lie n^+[t]) \quad  \text{ and } \quad  U_\mathbb F(\lie g[t]^\sigma)=U_\mathbb F(\lie n^-[t]^\sigma)U_\mathbb F(\lie h[t]^\sigma)U_\mathbb F(\lie n^+[t]^\sigma).$$

Notice that $U_\mathbb Z(\lie g)=U(\lie g)\cap U_\mathbb Z(\lie g[t])$, i.e., the integral form of $\lie g$ coincides with its intersection with he integral form of $U(\lie g[t])$ which allows us to regard $U_\mathbb Z(\lie g)$ as a $\mathbb Z$-subalgebra of $U_\mathbb Z(\lie g[t])$.

If $\lie g$ is of type $A_{2n}$ and the characteristic of $\mathbb F$ is 2, then $U_\mathbb F(\lie g_0)$ is not isomorphic to the usual hyperalgebra of $\lie g_0$ over $\mathbb F$,  constructed by using Kostant's integral form of $U(\lie g_0)$. However, if the characteristic of $\mathbb F$ is not 2, then $U_\mathbb F(\lie g_0)$ is isomorphic to the usual hyperalgebra of $\lie g_0$ over $\mathbb F$. Details can be found in \cite[Remark 1.5]{BM14} and \cite{bianchi12}.  This is the reason why we are not working with fields of characteristic 2 when $\lie g$ is of type $A_{2n}$.

\subsection{A certain automorphism of hyperalgebras}

One can check that there exists a unique Lie algebra automorphism $\psi \colon \lie g \to \lie g$ satisfying $\psi(x_{\alpha_i}^\pm) = x_{\alpha_i}^\mp$ for all $i\in I$.  Moreover, $\psi$ admits a unique extension to a Lie algebra automorphism of $\tlie g$ (which we keep denoting by $\psi$) that satisfies $\psi(x\otimes f) = \psi(x)\otimes f$ for all $x\in\lie g, f\in\bb C[t, t^{-1}]$.  Notice that this automorphism restricts to an automorphism of $\lie g[t]$.  Also notice that $\psi \circ \sigma = \sigma \circ \psi$.  Thus $\psi$ restricts to automorphisms of $\tlie g^\sigma$ and $\lie g[t]^\sigma$.

Keep denoting by $\psi$ the extension of $\psi$ to an automorphism of $U(\tlie g)$.  In particular, we have
\begin{equation} \label{e:inv+-def}
\psi\left( \left( x_{\alpha,\epsilon}^\pm \otimes t^{rm - \epsilon} \right)^{(k)} \right)
= (-1)^{k({\rm ht}(\alpha)-1)} \left( x_{\alpha,\epsilon}^\mp \otimes t^{rm - \epsilon} \right)^{(k)}, \ 
\psi \binom{h_{\alpha,\epsilon}}{k}
= \binom{-h_{\alpha,\epsilon}}{k}, \ 
\psi \left( \Lambda_{\alpha, r}^\sigma \right)
= - \Lambda_{\alpha, r}^\sigma
\end{equation}
for all $\alpha\in R^+$, $\epsilon \in \{0, \dotsc, m-1\}$, $r \in \bb Z$, and $k \ge 0$.  It follows that $\psi$ restricts to an automorphism of $U_\bb Z(\lie a)$ for all $\lie a \in \{ \lie g_0, \, \lie h_0, \, \lie g[t], \, \lie h[t], \, \lie g[t]^\sigma, \, \lie h[t]^\sigma, \, \lie h[t]^\sigma_\pm\}$.  Moreover, notice that, for every $\mu \in P_0$, there exists a unique ring homomorphism $\mu \colon U_{\bb Z} (\lie h_0) \to \bb Z$ satisfying
\begin{equation} \label{eq:P.in.U_Fh}
\mu \binom{h_{i,0}}{k} = \binom{\mu(h_{i,0})}{k} \quad \textup{and} \quad
\mu(xy)=\mu(x)\mu(y)
\qquad \textup{for all $i\in I_0$, $k \ge 0$, $x,y \in U_\mathbb Z(\lie h_0)$}.
\end{equation}
Therefore,
\begin{equation}\label{e:inv+-w}
\mu\left(\psi \binom{h_{i,0}}{k} \right)
= \binom{-\mu(h_{i,0})}{k}
\qquad \textup{for all $i\in I_0$, $k>0$, $\mu\in P_0$}.
\end{equation}

\subsection{Technical identities}

The proof of the following lemma is very similar to that of \cite[Lema 2.2.8]{bianchi12}.

\begin{lemma} \label{isos} 
Let $\alpha\in R_0^+$.
	
\begin{enumerate}[(a), leftmargin=*]
\item Suppose $\lie g$ is not of type $A_{2n}$. 
\begin{enumerate}[(i)]
\item If $\alpha\in R_{\lng}$, then $\lie{sl}_{2,\alpha}[t^m] := {\rm span} \{x^\pm_{\alpha,0}\otimes t^{mk}, h_{\alpha,0}\otimes t^{mk}  \mid  k\in\mathbb Z_{\ge0}\}$ is a subalgebra of $\lie g[t]^\sigma$ isomorphic to $\lie{sl}_2[t]$. 

\item If $\alpha\in R_{\sh}$, then $\lie{sl}_{2,\alpha}[t] := {\rm span} \{x^\pm_{\alpha,\epsilon}\otimes t^{mk-\epsilon}, h_{\alpha,\epsilon}\otimes t^{mk-\epsilon}  \mid  k\in\mathbb Z_{\ge0},  \ 0 \leq \epsilon < m \}$ is a subalgebra of $\lie g[t]^\sigma$ isomorphic to $\lie{sl}_2[t]$.
\end{enumerate}
		
\item Suppose $\lie g$ is of type $A_{2n}$.
\begin{enumerate}[(i)]
\item If $\alpha\in R_{\lng}$, then $\lie{sl}_{2,\alpha}[t] := {\rm span} \{x^\pm_{\alpha,\epsilon}\otimes t^{mk-\epsilon}, h_{\alpha,\epsilon}\otimes t^{mk-\epsilon}  \mid  k\in\mathbb Z_{\ge0},  \ \epsilon =0,1 \}$ is a subalgebra of $\lie g[t]^\sigma$ isomorphic to $\lie{sl}_2 [t]$. 

\item If $\alpha \in R_{\sh}$, then ${\rm span}	\{x_{\alpha,\epsilon}^\pm\otimes t^{2k+\epsilon},x_{\alpha+\sigma(\alpha),1}^\pm\otimes t^{2k+1}, h_{\alpha,\epsilon}\otimes t^{2k+\epsilon}  \mid  k\in\mathbb Z_{\ge0}, \ \epsilon = 0,1 \}$ is a subalgebra of $\lie g[t]^\sigma$ isomorphic to $\lie{sl}_3[t]^{\tau}$, where $\tau$ is the nontrivial automorphism of $A_2$.
\end{enumerate}
\end{enumerate}
\end{lemma}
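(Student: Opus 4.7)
The strategy is to identify, for each case, a $\sigma$-stable Lie subalgebra $\lie k_\alpha \subseteq \lie g$ such that the displayed span coincides with $\lie k_\alpha[t]^\sigma := \lie k_\alpha[t] \cap \lie g[t]^\sigma$, and then to read off the claimed isomorphism from the standard description of fixed subalgebras of current algebras.

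Concretely, for $\alpha \in R_0^+$, fix the unique $\beta \in O$ with $\beta|_{\lie h_0} = \alpha$, and let $\lie k_\alpha$ be the Lie subalgebra of $\lie g$ generated by $\{x^\pm_{\sigma^j(\beta)}, h_{\sigma^j(\beta)} : 0 \le j < m_\beta\}$ in cases (a)(i), (a)(ii), (b)(i), and also by $x^\pm_{\beta+\sigma(\beta)}$ in case (b)(ii). Since $\sigma$ preserves the orbit of $\beta$ and fixes $\beta+\sigma(\beta)$, each $\lie k_\alpha$ is $\sigma$-stable. The pairwise orthogonality of distinct short roots in a $\sigma$-orbit of $R$ (when $\lie g \not\cong A_{2n}$), and the analogous orthogonality of the long-root orbits in $A_{2n}$, yield $\lie k_\alpha \cong \lie{sl}_2$ in (a)(i) and $\lie k_\alpha \cong \lie{sl}_2^{\oplus m_\beta}$ in (a)(ii) and (b)(i), with $\sigma$ acting either trivially or by cyclic permutation of the summands. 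In (b)(ii), the triple $\beta, \sigma(\beta), \beta+\sigma(\beta)$ spans an $A_2$ subsystem of $R$, whence $\lie k_\alpha \cong \lie{sl}_3$; by \eqref{e:x2Rs}, $\sigma$ swaps $\beta \leftrightarrow \sigma(\beta)$ and negates $x^\pm_{\beta+\sigma(\beta)}$, so $\sigma|_{\lie k_\alpha}$ is the nontrivial Dynkin involution $\tau$ of $\lie{sl}_3$.

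The next step is to identify the displayed span with $\lie k_\alpha[t]^\sigma$. The defining formulas for $x^\pm_{\alpha,\epsilon}$, $h_{\alpha,\epsilon}$, and (in the $A_{2n}$ case) $x^\pm_{\alpha+\sigma(\alpha),1}$ in Section~\ref{s:chevalley} show by inspection that every listed element belongs to $\lie k_\alpha[t] \cap \lie g[t]^\sigma$. Conversely, combining the $\sigma$-eigenspace decomposition of $\lie k_\alpha$ with the graded decomposition of $\lie g[t]^\sigma$ exhibits these elements as a $\C$-basis of $\lie k_\alpha[t]^\sigma$. Closure under the Lie bracket is then automatic, although it can also be verified directly from the commutation relations at the end of Section~\ref{s:chevalley} together with \eqref{e:x2Rs}.

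Finally, the desired isomorphism follows from standard facts. In (a)(i), $\sigma|_{\lie k_\alpha}$ is the identity, so $\lie k_\alpha[t]^\sigma = \lie k_\alpha \otimes \C[t^m] \cong \lie{sl}_2[t]$ via the reparameterization $t^m \mapsto t$. In (a)(ii) and (b)(i), the classical identification of the $\tilde\sigma$-fixed subalgebra of $(\lie{sl}_2^{\oplus m})[t]$ under cyclic permutation twisted by a suitable $t \mapsto \zeta^{\pm 1}t$ with $\lie{sl}_2[t]$ provides the isomorphism, realised concretely by an averaging map of the form $x \otimes t^n \mapsto \sum_{j=0}^{m-1} \zeta^{\pm jn}\sigma^j(x) \otimes t^n$. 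In (b)(ii), the identification $\lie k_\alpha[t]^\sigma \cong \lie{sl}_3[t]^\tau$ is then immediate by construction. The main technical obstacle will be tracking the scaling factors $\sqrt 2, 2, 3$ and the sign $s$ peculiar to the $A_{2n}$ case in order to confirm the $\lie{sl}_3$ relations; the commutator identities at the end of Section~\ref{s:chevalley} and identity \eqref{e:x2Rs} are tailored precisely for this bookkeeping, and overall the argument closely parallels that of \cite[Lema 2.2.8]{bianchi12}.
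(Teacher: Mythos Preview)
Your proposal is correct and aligns with the paper's own treatment: the paper does not give a proof but simply refers to \cite[Lema~2.2.8]{bianchi12}, and your structural argument---identifying a $\sigma$-stable subalgebra $\lie k_\alpha$ of the appropriate type, verifying that the displayed span equals $\lie k_\alpha[t]^\sigma$, and reading off the isomorphism---is precisely the method carried out there. Your explicit acknowledgment that the argument parallels \cite[Lema~2.2.8]{bianchi12} confirms this.
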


Given a monomial of the form $(x^\pm_{\mu_1,- r_1} \otimes t^{r_1})^{(k_1)} \cdots (x^\pm_{\mu_j,- r_j} \otimes t^{r_j})^{(k_j)}$, with $r_1,\dots,r_j \in \mathbb Z_{\ge0}$, a fixed choice of $\pm$, and $j \ge 1$, define its hyperdegree to be $k_1 + \cdots + k_j$. The following result was proved in \cite[Lemma 4.2.13]{mitzman85}.

\begin{lemma} \label{lem:comm1}
Let $r, s, j, k \in \bb Z_{\ge0}$, $\mu \in \wt(\lie g_{-r})$ and $\nu \in \wt(\lie g_{-s})$. Then $(x^\pm_{\mu,- r} \otimes t^r)^{(j)} (x^\pm_{\nu,-s} \otimes t^s)^{(k)}$ is in the $\bb Z$-submodule generated by $(x^\pm_{\nu,-s} \otimes t^s)^{(k)} (x^\pm_{\mu,- r} \otimes t^r)^{(j)}$ and by monomials of hyperdegree strictly smaller than $j+k$.
\end{lemma}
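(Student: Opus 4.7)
The plan is to reduce the statement to a general property of the standard Poincar\'e-Birkhoff-Witt filtration on $U(\lie g[t]^\sigma)$, which bypasses any direct combinatorial manipulation of divided-power commutators. Let $\{F_n\}_{n\ge 0}$ denote the PBW filtration, where $F_n$ is the $\bb C$-subspace of $U(\lie g[t]^\sigma)$ spanned by products of at most $n$ elements of $\lie g[t]^\sigma$. Since the associated graded is the symmetric algebra $S(\lie g[t]^\sigma)$, which is commutative, we have the standard identity
\[
[F_m, F_n] \subseteq F_{m+n-1} \qquad \text{for all } m, n \ge 0.
\]

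Next, I would observe that the ordered-monomial $\bb Z$-basis of $U_\bb Z(\lie g[t]^\sigma)$ built from $\cal M[t]^\sigma$ is compatible with this filtration. Concretely, a basis monomial of hyperdegree $k_1 + \cdots + k_p$ equals (up to multiplication by a unit in $\bb Q$) a PBW monomial of the same total degree in the Lie algebra generators, since by definition $(x^\pm_{\mu,-r} \otimes t^r)^{(k)} = \tfrac{1}{k!}(x^\pm_{\mu,-r} \otimes t^r)^k$ and analogously for the binomial generators of $U_\bb Z(\lie h[t]^\sigma)$. Consequently, $F_n \cap U_\bb Z(\lie g[t]^\sigma)$ is precisely the $\bb Z$-span of ordered monomials of hyperdegree at most $n$.

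Setting $X = x^\pm_{\mu,-r} \otimes t^r$ and $Y = x^\pm_{\nu,-s} \otimes t^s$, both products $X^{(j)} Y^{(k)}$ and $Y^{(k)} X^{(j)}$ lie in $F_{j+k} \cap U_\bb Z(\lie g[t]^\sigma)$. By the filtration identity, their difference $[X^{(j)}, Y^{(k)}]$ lies in $F_{j+k-1}$, and since it is a difference of two elements of $U_\bb Z(\lie g[t]^\sigma)$, it lies in $F_{j+k-1} \cap U_\bb Z(\lie g[t]^\sigma)$. By the previous paragraph, this intersection is the $\bb Z$-span of ordered monomials of hyperdegree strictly smaller than $j+k$. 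Rearranging yields $X^{(j)} Y^{(k)} = Y^{(k)} X^{(j)} + R$, where $R$ is in the stated $\bb Z$-submodule, which is the claim.

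The main point requiring care is the compatibility of hyperdegree with the PBW filtration for the twisted integral form; in particular, one must check it for the generators $\Lambda^\sigma_{i,k}$ of $U_\bb Z(\lie h[t]^\sigma)$ as well, which by their exponential-series definition are polynomials (of bounded degree) in elements $h_{\alpha,\epsilon} \otimes t^{mk-\epsilon}$ and thus also sit inside the appropriate $F_n$. Granting this, the argument above is formal. The alternative combinatorial approach (an induction on $j+k$ using explicit commutator expansions in $\lie g[t]^\sigma$, where $[X,Y]$ is computed via the Chevalley relations of Section~\ref{s:chevalley}) would also work but requires bookkeeping that the filtration argument renders unnecessary.
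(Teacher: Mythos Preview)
The paper does not give a proof here; it simply cites \cite[Lemma~4.2.13]{mitzman85}. So there is no direct comparison to make, and your filtration argument is a reasonable independent route.

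Your core idea is sound: since $X = x^\pm_{\mu,-r}\otimes t^r$ and $Y = x^\pm_{\nu,-s}\otimes t^s$ both lie in $\lie n^\pm[t]^\sigma$, which is a Lie subalgebra, the commutator $X^{(j)}Y^{(k)} - Y^{(k)}X^{(j)}$ lies in $U_\bb Z(\lie n^\pm[t]^\sigma)\cap F_{j+k-1}$, and the ordered $\bb Z$-basis of $U_\bb Z(\lie n^\pm[t]^\sigma)$ consists precisely of the $x^\pm$-monomials whose hyperdegree (as defined in the paper) equals their PBW degree. That yields the lemma. I would rewrite your argument to work inside $U_\bb Z(\lie n^\pm[t]^\sigma)$ from the outset rather than in all of $U_\bb Z(\lie g[t]^\sigma)$; this is where the statement actually lives and it makes the compatibility check trivial.

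The last paragraph, however, contains a genuine error that you should remove. You assert that the generators $\Lambda^\sigma_{i,r}$ ``sit inside the appropriate $F_n$'' so that hyperdegree matches PBW degree on all of $\cal M[t]^\sigma$. This is false: from the exponential definition one sees that $\Lambda^\sigma_{i,r}$ has leading term $\tfrac{(-1)^r}{r!}(h_{i,\ast}\otimes t^{\ast})^r$, so its PBW degree is $r$, not $1$. Under the correspondence $(h_{i,-r}\otimes t^r)^k \leftrightarrow (\Lambda^\sigma_{i,r})^k$, the natural ``hyperdegree'' $k$ therefore does \emph{not} agree with the PBW filtration degree $rk$ on $U_\bb Z(\lie h[t]^\sigma)$. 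Fortunately this plays no role in the lemma, since no Cartan-type elements appear; but as written your claimed compatibility on $U_\bb Z(\lie g[t]^\sigma)$ is incorrect, and the paragraph should be dropped.
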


Given $\alpha\in R^+$ and $r,s\in \mathbb Z_{\ge0}$, consider the following power series with coefficients in $U(\lie g[t])$:
\[
X_{\alpha;(r,s)}(u) = \sum_{j=1}^{\infty} x_\alpha^- \otimes t^{r+(r+s)(j-1)}u^{j}.
\]
Next result was proven in \cite{garland78} in the loop algebras context and its proof remains valid for current algebras.

\begin{lemma} \label{l:garland} Let $\alpha\in R^+$, $r,s,\ell ,k\in \mathbb Z_{\ge0}$. We have
\[
(x_\alpha^+\otimes t^s)^{(\ell )}(x_\alpha^-\otimes t^r)^{(k)} = ({X_{\alpha;(r,s)}(u)}^{(k-\ell)})_{k} \ \mod U_\bb Z^+,
\]
where $U_\bb Z^+ := U_\bb Z (\lie n^- [t]) U_\bb Z (\lie h [t]_+)^0 + U_\bb Z(\lie g[t]) U_\bb Z(\lie n^+[t])^0$.
\end{lemma}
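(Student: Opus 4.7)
My approach is to reduce the identity to the $\lie{sl}_2[t]$ case and then invoke Garland's original argument from \cite{garland78}. The elements $x_\alpha^+, x_\alpha^-, h_\alpha$ span an $\lie{sl}_2$-subalgebra of $\lie g$, so the span of $\{x_\alpha^\pm \otimes t^j,\ h_\alpha \otimes t^j : j\ge 0\}$ is a subalgebra of $\lie g[t]$ isomorphic to $\lie{sl}_2[t]$, and the isomorphism sends Chevalley divided powers to Chevalley divided powers. Both sides of the claimed identity lie in the corresponding $U_\bb Z(\lie{sl}_2[t])$, and the intersection of $U_\bb Z^+$ with this integral form is the analogous ideal inside $U_\bb Z(\lie{sl}_2[t])$. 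Hence it suffices to treat the case $\lie g = \lie{sl}_2$.

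The proof in $U_\bb Z(\lie{sl}_2[t])$ proceeds by induction on $\ell$. The base case $\ell = 0$ is a direct calculation: since the summands of $X_{\alpha;(r,s)}(u)$ all lie in the abelian subalgebra $\lie n^-[t]$ and therefore mutually commute, the multinomial formula for commuting divided powers gives
\[
X_{\alpha;(r,s)}(u)^{(k)} = \sum_{\substack{k_1,k_2,\ldots\ge 0 \\ \sum k_j = k}} \prod_{j\ge 1} \left( x_\alpha^- \otimes t^{r+(r+s)(j-1)} \right)^{(k_j)} u^{\sum_j j k_j}.
\]
The coefficient of $u^k$ requires $\sum_j j k_j = k$ together with $\sum_j k_j = k$, forcing $k_1=k$ and all other $k_j=0$, which yields $(x_\alpha^-\otimes t^r)^{(k)}$, matching the LHS.

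For the inductive step, the plan is to commute one factor of $x_\alpha^+ \otimes t^s$ through $(x_\alpha^-\otimes t^r)^{(k)}$, using the basic bracket $[x_\alpha^+ \otimes t^s,\,x_\alpha^- \otimes t^r] = h_\alpha \otimes t^{r+s}$. Working modulo $U_\bb Z^+$ discards every term that ends with a factor in $\lie n^+[t]$, and any surviving Cartan insertion $h_\alpha \otimes t^{r+s}$ either combines with the Cartan power series appearing in the expansion of $X_{\alpha;(r,s)}(u)^{(k-\ell)}$ at the correct $u$-degree or is absorbed into $U_\bb Z(\lie n^-[t]) U_\bb Z(\lie h[t]_+)^0 \subseteq U_\bb Z^+$. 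Applying the inductive hypothesis to the $\ell-1$ remaining $x_\alpha^+\otimes t^s$-factors and matching generating-function coefficients produces precisely $(X_{\alpha;(r,s)}(u)^{(k-\ell)})_k$.

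The main obstacle is the combinatorial bookkeeping in the inductive step: one must carefully track how iterated commutators of $x_\alpha^+ \otimes t^s$ with a length-$k$ divided power generate nested sums of products with Cartan insertions $h_\alpha \otimes t^{r+s}$, and reorganize these into the correct coefficient of a divided power of $X_{\alpha;(r,s)}(u)$. This is precisely the delicate computation performed in Garland's original proof. Since the identities involved depend only on the $\lie{sl}_2$-commutation relations and divided-power calculus and never invoke $t^{-1}$, the argument transfers without essential modification from the loop algebra to the current algebra setting.
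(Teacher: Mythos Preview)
Your proposal is correct and follows the same approach as the paper, which simply cites \cite{garland78} and remarks that the proof there, carried out for loop algebras, remains valid for current algebras since no negative powers of $t$ are used. You give more explicit detail (the reduction to $\lie{sl}_2[t]$, the base case $\ell=0$, and a sketch of the inductive commutation), but the core idea---that this is Garland's computation transported from $\tlie g$ to $\lie g[t]$---is identical to what the paper asserts.
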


We now define certain power series with coefficients in $U(\lie g[t]^\sigma)$.  Let $r,s\in \mathbb Z_{\ge0}$. When either $\lie g$ is not of type $A_{2n}$ and $\mu \in R_{\sh}$, or $\lie g$ is of type $A_{2n}$ and $\mu\in R_0^+$, define
\[
X^{\sigma}_{\mu;(r,s)}(u) = \sum_{k=1}^\infty \left( x^-_{\mu,-(r+(r+s)(k-1))} \otimes t^{r+(r+s)(k-1)}\right) u^{k}.
\]
When $\lie g$ is not of type $A_{2n}$ and $\mu \in R_{\lng}$, define
\[
X^{\sigma}_{\mu;(r,s)}(u) = \sum_{k=1}^\infty \left( x^-_{\mu,0} \otimes t^{m(r+(r+s)(k-1)))} \right) u^{k}.
\]
For simplicity, we may denote 
$${U_\bb Z^+}^\sigma:=U_\bb Z (\lie n^- [t]^\sigma) U_\bb Z (\lie h [t]_+^\sigma)^0 + U_\mathbb Z(\lie{g}[t]^\sigma)U_\mathbb Z (\lie n^+ [t]^\sigma)^0,$$ 
$$X^\sigma_{\mu}:=X^\sigma_{\mu; (0,1)}, \quad \text{ and } \quad X^\sigma_{i;(r,s)}:=X^\sigma_{\alpha_i;(r,s)}.$$  
We now state a lemma whose proof follows directly from Lemma \ref{isos} (part (a)), Lemma \ref{l:garland} (part (b)) and \cite[Lemma 4.4.1(ii)]{mitzman85} (part (c)).

\begin{lemma} \label{basicreltw} 
Let $l,k,r,s\in\mathbb Z_{\ge0}$ and $0\le l\le k$.
	
\begin{enumerate}[(a), leftmargin=*]
\item If either $\lie g$ is not of type $A_{2n}$ and $\mu \in R_{\sh}$, or if $\lie g$ is of type $A_{2n}$ and $\mu \in R_{\lng}$, we have
\[
(x_{\mu,-s}^+ \otimes t^{s})^{(l)} (x_{\mu,-r}^- \otimes t^{r})^{(k)} 
= (-1)^l \left( X^{\sigma}_{\mu;(r,s)}(u)^{(k-l)} \right)_{k} 
\textup{ mod } {U_\bb Z^+}^\sigma.
\]
		
\item If $\lie g$ is not of type $A_{2n}$ and $\mu \in R_{\lng}$, we have
\[
(x_{\mu,0}^+\otimes t^{ms})^{(l)} (x_{\mu,0}^-\otimes t^{mr})^{(k)} 
= (-1)^l \left( X^{\sigma}_{\mu;(r,s)}(u)^{(k-l)}\right)_{k} 
\textup{ mod } {U_\bb Z^+}^\sigma.
\]
		
\item If $\lie g$ is of type $A_{2n}$ and $\mu\in R_{\sh}$, we have
\begin{enumerate}[(i)]
\item $(x_{\mu,0}^+ \otimes t^{ms})^{(k)} (x_{\mu,0}^- \otimes 1)^{(k+r)}  = (X(u)^{(r)})_{k+r} \textup{ mod } {U_\bb Z^+}^\sigma$ 	where $X(u) = \sum_{k=1}^{\infty} x_{\mu,0}\otimes t^{msj}u^{j}.$
			
\item $(x_{2\mu,1}^+\otimes t)^{(l)} (x_{2\mu,1}^-\otimes t)^{(k)} =(-1)^l \left( Y(u)^{(k-l)}\right)_{k} \textup{ mod } {U_\bb Z^+}^\sigma,$ where $Y(u) = \sum_{k=1}^{\infty} x_{2\mu,1}\otimes t^{2k-1}u^k.$
			
\item $(x_{2\mu,1}^+\otimes t)^{(k)} (x_{\mu,0}^-\otimes 1)^{(2k+r)} =\displaystyle{\sum_{\substack{k_1,k_2\\ k_1+2k_2=r}}\left( X^{\sigma}_{\mu}(u)^{(k_1)} Z(u)^{(k_2)}\right)_{k+k_1} }  \textup{ mod } {U_\bb Z^+}^{\sigma}$, where $Z(u) = \sum_{k=1}^{\infty} x_{2\mu,1}\otimes t^{2k-1}u^{2k-1}.$\qedhere
			 
\end{enumerate}
\end{enumerate}
\end{lemma}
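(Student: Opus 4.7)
The proof proceeds by reducing each case to a known identity via the subalgebra isomorphisms recorded in Lemma~\ref{isos}, then invoking the corresponding Garland-type formula.

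For part (a), I would first apply Lemma~\ref{isos}(a)(ii) (when $\lie g$ is not of type $A_{2n}$ and $\mu\in R_{\sh}$) or Lemma~\ref{isos}(b)(i) (when $\lie g$ is of type $A_{2n}$ and $\mu\in R_{\lng}$); both identify the subalgebra $\lie{sl}_{2,\mu}[t]\subseteq\lie g[t]^\sigma$ with $\lie{sl}_2[t]$. Under this isomorphism, $x^\pm_{\mu,-j}\otimes t^j$ corresponds (up to sign) to a Chevalley generator $e^\pm\otimes t^j$ in $\lie{sl}_2[t]$. The ideal ${U^+_\bb Z}^\sigma$ intersected with $U_\bb Z(\lie{sl}_{2,\mu}[t])$ maps onto the analogous ideal $U^+_\bb Z$ in $U_\bb Z(\lie{sl}_2[t])$, so I would apply Garland's formula (Lemma~\ref{l:garland}) in $\lie{sl}_2[t]$ and pull back the identity. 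Comparing the power series $X_{\alpha;(r,s)}(u)$ of Lemma~\ref{l:garland} with the definition of $X^\sigma_{\mu;(r,s)}(u)$ shows that the exponents match directly; the $(-1)^l$ factor emerges from keeping track of the signs in the Chevalley basis identification within $\cal C^\sigma(O)$.

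For part (b), with $\lie g\not\cong A_{2n}$ and $\mu\in R_{\lng}$, Lemma~\ref{isos}(a)(i) gives $\lie{sl}_{2,\mu}[t^m]\cong\lie{sl}_2[t]$ via the substitution $t^m\leftrightarrow t$. I would then apply Garland's formula with parameters $r,s$ on the $\lie{sl}_2[t]$ side and transport back. Each occurrence of $t^j$ on the $\lie{sl}_2[t]$ side becomes $t^{mj}$ after pullback, which exactly reproduces the definition of $X^\sigma_{\mu;(r,s)}(u)$ in the long-root, non-$A_{2n}$ case. The argument is structurally identical to part (a), except that the identification with $\lie{sl}_2[t]$ already absorbs all factors of $m$ in the $t$-exponents.

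For part (c), with $\lie g\cong A_{2n}$ and $\mu\in R_{\sh}$, Lemma~\ref{isos}(b)(ii) identifies the relevant subalgebra of $\lie g[t]^\sigma$ with $\lie{sl}_3[t]^\tau$, the twisted current algebra arising from the nontrivial diagram automorphism $\tau$ of $A_2$. Here I would directly invoke \cite[Lemma~4.4.1(ii)]{mitzman85}, which establishes the three identities (i), (ii), (iii) inside $\lie{sl}_3[t]^\tau$. The commutator relation in \eqref{e:x2Rs} between $x^+_{\mu,\epsilon}$ and $x^+_{2\mu,1}$ provides the bridge needed to interpret mixed products in the $\lie{sl}_3[t]^\tau$-picture, and in particular is what produces the sum over $k_1+2k_2=r$ in identity (iii).

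The main obstacle, and what forces the case split in the statement, is the careful bookkeeping of normalizing scalars (the $\sqrt{2}$'s and $2$'s in the Chevalley basis definition in Section~\ref{s:chevalley}) and the sign $s\in\{\pm1\}$ from \eqref{e:x2Rs}. Provided $O$ is chosen so that $s=1$, as specified in the paragraph preceding \eqref{eq:chev.basis.g}, the signs collapse to the single factor $(-1)^l$ in (a)-(b), and to the explicit form shown in (c). Beyond this computational care, each case is a direct transport of a known identity through an explicit Lie algebra isomorphism, so no new conceptual ingredient is needed.
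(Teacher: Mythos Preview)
Your proposal is correct and matches the paper's approach exactly: the paper's entire proof is the single sentence ``follows directly from Lemma~\ref{isos} (part (a)), Lemma~\ref{l:garland} (part (b)) and \cite[Lemma 4.4.1(ii)]{mitzman85} (part (c)),'' and your plan is precisely the expanded version of this---transport through the $\lie{sl}_2[t]$ or $\lie{sl}_3[t]^\tau$ isomorphisms of Lemma~\ref{isos} and then apply Garland's identity or Mitzman's result. Your extra remarks on the sign bookkeeping and the choice of $O$ ensuring $s=1$ in \eqref{e:x2Rs} are exactly the computational details the paper suppresses.
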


\section{Preliminaries on modules}

\subsection{Integrable modules for hyperalgebras} \label{sec:rev.g}

Let ${\mathbb F}$ be either a field or $\mathbb Z$.  In this subsection we will review the representation theory of $U_{\mathbb F} (\lie g_0)$.

Let $V$ be a $U_{\mathbb F} (\lie g_0)$-module.  For each $\mu \in U_{\mathbb F} (\lie h_0)^*$, denote the weight space of weight $\mu$ by $V_\mu = \{ v \in V \mid hv = \mu(h)v \ \textup{for all $h \in U_{\mathbb F} (\lie h_0)$}\}$ .  A nonzero vector $v \in V_\mu$ is said to be a weight vector.  If $V_\mu\ne 0$, then $\mu$ is said to be a weight of $V$.  Let $\wt_0(V) = \{\mu\in U_{\mathbb F} (\lie h_0)^* \mid V_\mu\ne 0\}$ denote the set of weights of $V$.  If $V = \bigoplus_{\mu \in \wt_0(V)} V_\mu$, then $V$ is said to be a weight module.

Using the inclusion $P_0 \hookrightarrow U_{\mathbb F} (\lie h_0)^*$ induced by \eqref{eq:P.in.U_Fh}, define a partial order $\le$ on $U_{\mathbb F} (\lie h_0)^*$ given by: $\mu\le\lambda$ if $\lambda-\mu\in Q_0^+$.  Notice that
$(x_{\alpha, 0}^\pm)^{(k)} V_\mu \subseteq V_{\mu\pm k\alpha} \qquad
\textup{for all} \ \alpha\in R_0^+, \ k > 0, \ \mu\in U_{\mathbb F} (\lie h_0)^*$.
Now, if $v \in V$ is a weight vector such that $(x_{\alpha, 0}^+)^{(k)} v = 0$ for all $\alpha\in R_0^+$ and $k>0$, then $v$ is said to be a highest-weight vector.  If $V$ is generated by a highest-weight vector, then it is said to be a highest-weight module.  If, for each $v \in V$, there exists $m>0$ such that $(x_{\alpha,0}^\pm)^{(k)}v=0$ for all $\alpha \in R_0^+$ and $k>m$, then $V$ is said to be integrable.

If $V$ is a weight-module whose weight spaces are finitely-generated $\mathbb F$-modules, its character is defined to be the function $\ch(V) \colon U_{\mathbb F} (\lie h_0)^* \to \mathbb Z$ given by $\ch(V)(\mu) = {\rm rank}_{\mathbb F} (V_\mu)$.  Thus, $\ch(V)$ can be regarded as an element of the group ring $\mathbb Z [U_{\mathbb F} (\lie h_0)^*]$.  Recall that \eqref{eq:P.in.U_Fh} induces an inclusion $P_0 \hookrightarrow U_{\mathbb F} (\lie h_0)^*$.  Thus, the group ring $\mathbb Z[P_0]$ can be regarded as a subring of $\mathbb Z[U_{\mathbb F} (\lie h_0)^*]$.  Denote by $e^\mu$ the element of $\bb Z [U_{\mathbb F} (\lie h_0)^*]$ corresponding to $\mu \in U_{\mathbb F} (\lie h_0)^*$.  There exists a unique action of $\cal W_0$ on $\mathbb Z[P_0]$ (by ring automorphisms) that satisfy $w \cdot e^\mu = e^{w\mu}$ for all $w \in \cal W_0$, $\mu \in P_0$.

\begin{theorem} \label{t:rh}
Let $V$ be a $U_{\mathbb F} (\lie g_0)$-module.

\begin{enumerate}[(a), leftmargin=*]
\item  \label{t:rh.a}
If $V$ is a finitely-generated $\mathbb F$-module, then $V$ is a weight module and $\wt_0 (V) \subseteq P_0$.

\item \label{t:rh.WinvZ}
If $V$ is an integrable weight module, then there are isomorphisms of ${\mathbb F}$-modules $V_\mu \cong V_{w\mu}$ for all $w \in \cal W_0$, $\mu \in P_0$.  Thus, $\ch(V) \in \mathbb Z[P_0]^{\cal W_0}$.

\item If $V$ is a highest-weight module of highest weight $\lambda$, then $V_{\lambda}$ is a free $\bb F$-module of rank $1$ and $V_{\mu}\ne 0$ only if $\mu\le \lambda$. Moreover, $V$  has a unique maximal proper submodule and, hence, also a unique irreducible quotient. In particular, $V$ is indecomposable.

\item \label{t:rh.d}
For each $\lambda\in P_0^+$, the $U_{\mathbb F} (\lie g_0)$-module $W_{\mathbb F} (\lambda)$ given as the quotient of $U_{\mathbb F} (\lie g_0)$ by the left ideal generated by
\begin{equation*}
x_{\alpha,0}^+, \qquad 
h-\lambda(h), \qquad
(x_{\alpha,0}^-)^{(k)}, \qquad 
\textup{for all} \ \alpha\in R_0^+, \ h \in U_{\mathbb F} (\lie h_0), \ k>\lambda(h_{\alpha,0}),
\end{equation*}
is a nonzero, free ${\mathbb F}$-module of finite rank.  Moreover, every free ${\mathbb F}$-module of finite rank that is a highest-weight module of highest weight $\lambda$ is a quotient of $W_{\mathbb F} (\lambda)$.

\item If ${\mathbb F}$ is a field and $V$ is a finite-dimensional irreducible $U_{\mathbb F} (\lie g_0)$-module, then there exists a unique $\lambda\in P_0^+$ such that $V$ is isomorphic to the irreducible quotient $V_{\mathbb F} (\lambda)$ of $W_{\mathbb F} (\lambda)$. If the characteristic of ${\mathbb F}$ is zero, then $W_{\mathbb F}(\lambda)$ is irreducible.

\item \label{t:chWg}
For each $\lambda\in P_0^+$, the character of $W_{\mathbb F} (\lambda)$ is given by the Weyl character formula.  In particular, $\mu \in \wt_0 (W_{\mathbb F} (\lambda))$ if, and only if, $w\mu \le \lambda$ for all $w \in \cal W_0$.
\end{enumerate}
\end{theorem}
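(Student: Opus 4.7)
The plan is to establish each part via standard hyperalgebra techniques, bootstrapping from the explicit construction in (d); each statement here is essentially a restatement (over an arbitrary field or $\bb Z$) of classical facts about Weyl modules of semisimple algebraic groups, and the integral-form versions needed here are developed in \cite{bianchi12, BM14, BMM15}. I would carry the items out in the order (d) $\to$ (f) $\to$ (c) $\to$ (e), then (a) $\to$ (b).

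For (d), I would build $W_{\mathbb F}(\lambda)$ by base change from an integral form. Over $\bb C$, the classical irreducible $\lie g_0$-module $W_{\bb C}(\lambda)$ of highest weight $\lambda$ is finite-dimensional, and its highest-weight vector $v_\lambda$ generates a $U_{\bb Z}(\lie g_0)$-submodule $W_{\bb Z}(\lambda) = U_{\bb Z}(\lie g_0) v_\lambda = U_{\bb Z}(\lie n_0^-) v_\lambda$ (using the integral triangular decomposition \eqref{eq:Z.triang.dec}), which is a free $\bb Z$-module of rank $\dim_{\bb C} W_{\bb C}(\lambda)$. Defining $W_{\mathbb F}(\lambda) := \mathbb F \otimes_{\bb Z} W_{\bb Z}(\lambda)$ makes it automatically a free $\mathbb F$-module of that same finite rank. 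To verify the universal property it suffices to check that the defining relations of (d) hold already in $W_{\bb C}(\lambda)$ (the relations $(x_{\alpha,0}^-)^{(k)} v_\lambda = 0$ for $k>\lambda(h_{\alpha,0})$ follow from the classical $\lie{sl}_2$-theory applied to the root $\alpha$), and so they hold in $W_{\bb Z}(\lambda)$ and in its base change $W_{\mathbb F}(\lambda)$. With (d) in hand, (f) is immediate: base change preserves rank, so $\ch W_{\mathbb F}(\lambda) = \ch W_{\bb C}(\lambda)$, which is given by the Weyl character formula. Part (c) is then a PBW argument combined with the triangular decomposition of $U_{\mathbb F}(\lie g_0)$: a highest-weight module is spanned by $U_{\mathbb F}(\lie n_0^-) v_\lambda$, the weight space $V_\lambda$ equals $\mathbb F v_\lambda$ since only constants in $U_{\mathbb F}(\lie n_0^-)$ have weight $0$, and the sum of all proper submodules is proper (none meets $V_\lambda$). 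Part (e) follows by showing, via iterated application of the $(x_{\alpha,0}^+)^{(k)}$ and finite-dimensionality, that any finite-dimensional irreducible has a highest-weight vector; complete reducibility in characteristic zero yields irreducibility of $W_{\bb C}(\lambda)$.

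For (b), the key tool is Weyl-group translation by lifts of simple reflections: for each $i\in I_0$ define
\[
\tilde s_i \;=\; \Bigl(\sum_{k\ge 0}(x_{\alpha_i,0}^+)^{(k)}\Bigr)\Bigl(\sum_{k\ge 0}(-1)^k(x_{\alpha_i,0}^-)^{(k)}\Bigr)\Bigl(\sum_{k\ge 0}(x_{\alpha_i,0}^+)^{(k)}\Bigr),
\]
which makes sense on any integrable weight module since only finitely many terms act nontrivially on each vector; a short computation (entirely inside the $\lie{sl}_2$-triple generated by $\alpha_i$) shows that $\tilde s_i$ is an $\mathbb F$-linear isomorphism $V_\mu \to V_{s_i\mu}$. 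Since $\cal W_0$ is generated by the $s_i$, this gives the required isomorphisms and hence $\ch(V)\in\bb Z[P_0]^{\cal W_0}$. For (a), the argument relies on the Kostant-type binomial identities satisfied by the $\binom{h_{i,0}}{k}$ in $U_{\bb Z}(\lie h_0)$: these identities force every eigenvalue of the commuting family $\{h_{i,0}\}_{i\in I_0}$ on a finitely-generated $\mathbb F$-module to come from evaluation at some $\mu\in P_0$, and force simultaneous diagonalizability, so $V$ decomposes as a direct sum of $\mathbb F$-modules indexed by $P_0$.

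The main obstacle is not any single step but rather the anomaly in type $A_{2n}$ when $\textup{char}\,\mathbb F = 2$, where the integral form $U_{\bb Z}(\lie g_0)$ in use here does not coincide with Kostant's usual $\bb Z$-form of $U(\lie g_0)$; the paper sidesteps this by excluding characteristic $2$ in the $A_{2n}$ case, and the necessary comparison results are in \cite[Remark 1.5]{BM14} and \cite{bianchi12}. A secondary technical point is verifying that the divided-power exponentials $\tilde s_i$ above are genuinely $\mathbb F$-linear operators (as opposed to formal expressions): this is exactly where integrability of $V$ is used, and it is the hypothesis under which the sums become finite on each vector.
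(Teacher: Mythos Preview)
Your approach is sound in outline, but it differs from the paper's in a basic way: the paper gives no direct argument at all. Its entire proof consists of citations --- to \cite[Theorem~2.3.6 and Proposition~2.3.5]{macedo13} for the case $\mathbb F=\mathbb Z$, to \cite[Part~II]{jan03} for fields (via the identification of $U_{\mathbb F}(\lie g_0)$ with the distribution algebra of the corresponding algebraic group), and to \cite{humphreys90} and \cite[Section~2]{JM1} for the characteristic zero and positive characteristic subcases. What you have written is essentially a sketch of the content of those references, which is more informative but also carries more obligations.

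One genuine gap in your sketch: in part~(d) the theorem \emph{defines} $W_{\mathbb F}(\lambda)$ as the quotient of $U_{\mathbb F}(\lie g_0)$ by the listed relations, whereas you redefine it as $\mathbb F\otimes_{\mathbb Z} W_{\mathbb Z}(\lambda)$ with $W_{\mathbb Z}(\lambda)=U_{\mathbb Z}(\lie g_0)v_\lambda\subset W_{\mathbb C}(\lambda)$. Checking that the relations hold in the base-change module only shows that the quotient-by-relations surjects onto $\mathbb F\otimes_{\mathbb Z} W_{\mathbb Z}(\lambda)$; you still owe the reverse inequality (that the quotient-by-relations is no larger), without which neither the freeness claim in (d) nor the character formula in (f) follows. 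Over $\mathbb C$ this is the statement that the listed relations present the irreducible, and over $\mathbb Z$ one must further argue that the quotient-by-relations is torsion-free (e.g.\ by exhibiting a spanning set of monomials in the $(x_{\alpha,0}^-)^{(k)}$ whose images in $W_{\mathbb Z}(\lambda)$ are $\mathbb Z$-linearly independent). This is standard, but it is the crux of the integral theory and should not be elided.
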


\begin{proof}
If ${\mathbb F} = \bb Z$, then parts~\ref{t:rh.a}, \ref{t:rh.d}, \ref{t:chWg} are proved in \cite[Theorem~2.3.6]{macedo13} and part~\ref{t:rh.WinvZ} is proved in \cite[Proposition 2.3.5]{macedo13}.

If ${\mathbb F}$ is a field, then $U_{\mathbb F} (\lie g_0)$ is the algebra of distributions of an algebraic group of the same Lie type as $\lie g_0$.  Thus, this result is proved in \cite[Part II]{jan03}.  In the particular case where ${\mathbb F}$ is a field of characteristic zero, $U_{\mathbb F} (\lie g_0) \cong U((\lie g_0)_{\mathbb F})$, where $(\lie g_0)_{\mathbb F} = {\mathbb F} \otimes_\bb Z (\lie g_0)_\bb Z$ and $(\lie g_0)_\bb Z$ is the $\bb Z$-span of the Chevalley basis given in Section~\ref{s:chevalley}.  In this case, this result is also proved in \cite{humphreys90}.  In the particular case where ${\mathbb F}$ is a field of positive characteristic, this result is also proved in \cite[Section 2]{JM1}.
\end{proof}

\subsection{Demazure and local Weyl modules for twisted current hyperalgebras}\label{s:wdm}

Throughout this subsection, let $\bb F$ be either a field or $\bb Z$, let $\lie g$ be a finite-dimensional simple $\bb C$-Lie algebra with a triangular decomposition $\lie g = \lie n^- \oplus \lie h \oplus \lie n^+$, and let $\sigma \colon \lie g \to \lie g$ be a Dynkin diagram automorphism.

\begin{definition}\label{d:weyl}
Given $\lambda \in P^+_0$, let the twisted local graded Weyl module $W_\bb F^{c, \sigma} (\lambda)$ be the $U_\bb F (\lie g[t]^\sigma)$-module given as a quotient of $U_\bb F (\lie g[t]^\sigma)$ by the left ideal generated by
\begin{equation*}
U_\bb F (\lie n^+[t]^\sigma)^0, \ \,
U_\bb F (\lie h[t]_+^\sigma)^0, \ \,
h - \lambda(h), \ \,
(x_{\alpha,0}^-)^{(k)}, \quad \,
\textup{for all $h \in U_\bb F (\lie h_0)$, $\alpha \in R_0^+$, $k > \lambda( h_{\alpha,0} )$}.
\end{equation*}
Given $\ell \geq 0$ and $\lambda \in P^+_0$, let the twisted Demazure module $D_\bb F^\sigma (\ell, \lambda)$ be the $U_\bb F (\lie g[t]^\sigma)$-module given as a quotient of $U_\bb F (\lie g[t]^\sigma)$ by the left ideal generated by
\begin{equation} \label{eq:defn.rel.dem}
U_\bb F (\lie n^+[t]^\sigma)^0, \qquad
U_\bb F (\lie h[t]_+^\sigma)^0, \qquad
h - \lambda(h), \quad
(x_{\alpha,-s}^- \otimes t^s)^{(k)},
\end{equation}
for all $h \in U_\bb F (\lie h_0)$, $\alpha \in R_0^+$, $s \in \bb Z_{\ge0}$, $k > \max\{ 0, \delta_{s,1}\lambda(h_{\frac{\alpha}{2},0}) - \hat r_\alpha^\vee \ell s\}$, in case $ \lie g \cong A_{2n}$, $\alpha \in 2R_{\sh}$, and $k > \max\{ 0, \lambda(h_{\alpha,0}) - \hat r_\alpha^\vee \ell s\}$ in every other case.
\end{definition}

Notice that $W^{c,\sigma}_\bb F (\lambda)$, $D_\bb F^\sigma (\ell, \lambda)$ are weight modules, and that $D_\bb F^\sigma (\ell, \lambda)$ is a quotient of $W^{c,\sigma}_\bb F (\lambda)$.  Moreover, the ideal defining $W^{c,\sigma}_\bb F (\lambda)$ (resp. $D_\bb F^\sigma (\ell, \lambda)$) is the image of the ideal defining $W^{c,\sigma}_\bb Z (\lambda)$ (resp. $D_\bb Z^\sigma (\ell, \lambda)$) in $U_\bb F(\lie g[t]^\sigma)$.  Thus, we have the following isomorphisms of $U_\bb F (\lie g [t]^\sigma)$-modules:
\[
W^{c,\sigma}_\bb F (\lambda) \cong \bb F \otimes_\bb Z W^{c,\sigma}_\bb Z (\lambda)
\qquad \textup{and} \qquad
D_\bb F^\sigma (\ell, \lambda) \cong \bb F \otimes_\bb Z D_\bb Z^\sigma (\ell, \lambda).
\]

\section{Preliminary results} \label{prereq}

\subsection{Finite-dimensionality of Weyl modules}

The following lemma will be used in Proposition~\ref{prop:WZfg}.

\begin{lemma} \label{l:nilp} \
\begin{enumerate}[(a), leftmargin=*]
\item \label{l:nilsl2t}
If $V$ is a finite-dimensional $U_\mathbb F(\lie g[t])$-module, $\lambda \in P^+$ and $v \in V_\lambda$ is such that
\[
U_\bb F (\lie n^+ [t])^0 v = U_\bb F (\lie h[t]_+)^0 v = 0,
\]
then $(x^-_\alpha \otimes t^s)^{(k)}v = 0$ for all $\alpha \in R^+, s \ge \lambda (h_\alpha)$ and $k \in \bb Z_{>0}$.
		
\item \label{l:nilsl2tsigma}
If $V$ is a finite-dimensional $U_\mathbb F(\lie g[t]^\sigma)$-module, $\lambda \in P_0^+$ and $v \in V_\lambda$ is such that
\[
U_\mathbb F(\lie n^+[t]^\sigma)^0 v = U_\bb F (\lie h[t]_+^\sigma)^0 v = 0, 
\]
then $(x^-_{\mu,-s} \otimes t^s)^{(k)}v = 0$ for all $x^-_{\mu,-s} \in C^\sigma (O), s \ge d_\mu \lambda(h_{\mu,0})$ and $k \in \bb Z_{>0}$, where $d_\mu=2$ if $\lie g$ is of type $A_{2n}$ and $\mu \in R_{\sh}$, and $d_\mu=1$ otherwise. 
\end{enumerate}
\end{lemma}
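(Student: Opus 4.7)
The plan is to combine finite-dimensionality of $V$ (which supplies integrability of suitable $\lie{sl}_2$-subalgebras) with the Garland-type identities of Lemmas~\ref{l:garland} and~\ref{basicreltw}, and to induct on the divided-power exponent $k$. The hypotheses on $v$ yield $U_\bb Z^+ v = 0$ (resp.\ ${U_\bb Z^+}^\sigma v = 0$ in the twisted case), so the ``mod $U_\bb Z^+$'' error terms in those identities will play no role.

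For part~(a), I would fix $\alpha \in R^+$ and set $n := \lambda(h_\alpha)$. Since $V$ is finite-dimensional, the $\lie{sl}_2$-triple $\{x_\alpha^\pm, h_\alpha\}$ acts integrably on $V$, so $(x_\alpha^-)^{(N)} v = 0$ whenever $N > n$. The main body of the argument is an induction on $k \ge 1$: given $s \ge n$ and $k$, set $N := k(s+1) > n$ and apply Lemma~\ref{l:garland} to $(x_\alpha^+ \otimes t)^{(N-k)}(x_\alpha^-)^{(N)}$. The left-hand side kills $v$ by integrability, and the right-hand side expands (since the $x_\alpha^- \otimes t^a$'s commute) as
\[
\sum_{(m_j)} \prod_{j \ge 1}(x_\alpha^- \otimes t^{j-1})^{(m_j)}\, v,
\]
the sum ranging over tuples $(m_j)_{j\ge 1}$ of non-negative integers with $\sum_j m_j = k$ and $\sum_j j m_j = k(s+1)$. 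The tuple with $m_{s+1}=k$ (all others zero) contributes the target $(x_\alpha^- \otimes t^s)^{(k)} v$; any other admissible tuple has $m_{s+1} < k$, and then $\sum_j j m_j = k(s+1)$ forces some $j^* > s+1$ with $0 < m_{j^*} < k$. Reordering so that $(x_\alpha^- \otimes t^{j^*-1})^{(m_{j^*})}$ sits rightmost, the inductive hypothesis (valid because $m_{j^*} < k$ and $j^*-1 > s \ge n$) kills that factor on $v$. Hence only the leading term survives, yielding $(x_\alpha^- \otimes t^s)^{(k)} v = 0$.

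Part~(b) will follow the same template, using Lemma~\ref{isos} to identify the relevant subalgebra of $\lie g[t]^\sigma$ acting on $v$ and Lemma~\ref{basicreltw} in place of Lemma~\ref{l:garland}. In the cases where that subalgebra is $\lie{sl}_2[t]$ (Lemma~\ref{isos}(a.i),~(a.ii),~(b.i)), Lemma~\ref{basicreltw}(a) or~(b) substitutes directly for Lemma~\ref{l:garland} and the argument of part~(a) transports essentially verbatim. The remaining case is Lemma~\ref{isos}(b.ii), where $\lie g$ is of type $A_{2n}$ and $\mu \in R_{\sh}$: the subalgebra is $\lie{sl}_3[t]^\tau$, and three families of operators---$(x_{\mu,0}^- \otimes t^{2k})$, $(x_{\mu,1}^- \otimes t^{2k+1})$ and $(x_{2\mu,1}^- \otimes t^{2k+1})$---must be killed. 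My strategy is to first apply Lemma~\ref{basicreltw}(c.ii), which has the same $\lie{sl}_2[t]$-shape as Lemma~\ref{l:garland}, to obtain $(x_{2\mu,1}^- \otimes t^s)^{(k)} v = 0$ by the same leading-term induction (the requisite integrability bound coming from $\lie g_0$-integrability applied to the $\lie g_0$-weight $\lambda - 2k\mu$); these vanishings are then fed into Lemma~\ref{basicreltw}(c.i) and~(c.iii) to handle $(x_{\mu,0}^- \otimes t^s)^{(k)} v$ and $(x_{\mu,1}^- \otimes t^s)^{(k)} v$. The doubled threshold $s \ge 2\lambda(h_{\mu,0})$ reflects the mixing of $t$-degrees $2k$ and $2k+1$ built into Lemma~\ref{basicreltw}(c).

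The main obstacle will be this $A_{2n}$, short-root case: because Lemma~\ref{basicreltw}(c) produces monomials mixing $x_\mu^-$ and $x_{2\mu}^-$ factors, the leading-term/induction bookkeeping of part~(a) must be adapted to three separate identities and carried out in a carefully chosen order so that the necessary inductive hypotheses are in place at each step.
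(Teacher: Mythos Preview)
Your approach is exactly the one the paper invokes: it proves part~\ref{l:nilsl2t} by citing \cite[Proposition~3.1]{JM1} and part~\ref{l:nilsl2tsigma} by citing \cite[Proposition~3.2]{BM14}, and both of those references carry out precisely the Garland-identity-plus-induction-on-$k$ argument you have written out. Your treatment of part~\ref{l:nilsl2t} is complete and correct; one small point worth making explicit is that $(x_\alpha^-)^{(N)}v=0$ for $N>\lambda(h_\alpha)$ uses not only integrability of $V$ but also that $v$ is a highest-weight vector for the $\lie{sl}_2$-triple $\{x_\alpha^\pm,h_\alpha\}$ (which follows from $U_\bb F(\lie n^+[t])^0 v=0$)---integrability alone gives vanishing only for $N\gg 0$. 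For part~\ref{l:nilsl2tsigma}, the transport via Lemma~\ref{isos} is correct in all the $\lie{sl}_2[t]$ cases, and you have correctly flagged the $A_{2n}$ short-root case as the place where the bookkeeping becomes delicate. One caution there: the input you need to launch the induction in Lemma~\ref{basicreltw}(c.ii) is $(x_{2\mu,1}^-\otimes t)^{(N)}v=0$ for a specific $N$, and since $x_{2\mu,1}^-\otimes t\notin\lie g_0$ this does not come from $\lie g_0$-integrability in the direct way your parenthetical suggests; you should spell out how you obtain it (e.g.\ via the $\lie{sl}_2$-triple $\{x_{2\mu,1}^\pm,\,[x_{2\mu,1}^+,x_{2\mu,1}^-]\}$ together with the highest-weight property of $v$, or via the identity (c.iii) with $r=0$).
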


\begin{proof}
The proof of part~\ref{l:nilsl2t} is similar to that of \cite[Proposition 3.1]{JM1}. The proof of part~\ref{l:nilsl2tsigma} is similar to that of \cite[Proposition 3.2]{BM14}.
\end{proof}

\begin{lemma} \label{lem:WZint}
For any $\lambda \in P^+_0$, $W_\bb Z^{c, \sigma} (\lambda)$ is an integrable $U_\bb Z (\lie g_0)$-module.
\end{lemma}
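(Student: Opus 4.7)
The plan is to verify, for every positive root $\alpha \in R_0^+$, local nilpotency of both $(x_{\alpha, 0}^+)^{(k)}$ and $(x_{\alpha, 0}^-)^{(k)}$ on every vector of $W^{c, \sigma}_\mathbb{Z}(\lambda)$. By the triangular decomposition \eqref{eq:Z.triang.dec} and the defining relations (the augmentation ideals $U_\mathbb{Z}(\lie n^+[t]^\sigma)^0$ and $U_\mathbb{Z}(\lie h[t]_+^\sigma)^0$ annihilate $v_\lambda$, while $U_\mathbb{Z}(\lie h_0)$ acts by the character $\lambda$), one has $W^{c, \sigma}_\mathbb{Z}(\lambda) = U_\mathbb{Z}(\lie n^-[t]^\sigma) v_\lambda$, so this is a weight module with $\wt_0(W^{c, \sigma}_\mathbb{Z}(\lambda)) \subseteq \lambda - Q_0^+$. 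Local nilpotency of $x_{\alpha, 0}^+$ is then immediate: if $v \in W^{c,\sigma}_\mathbb{Z}(\lambda)_\mu$, then $(x_{\alpha, 0}^+)^{(k)} v$ has weight $\mu + k \alpha$, which leaves $\lambda - Q_0^+$ for $k$ sufficiently large.

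For $x_{\alpha, 0}^-$ the defining relation $(x_{\alpha, 0}^-)^{(k)} v_\lambda = 0$ for $k > \lambda(h_{\alpha, 0})$ gives local nilpotency at $v_\lambda$. I would propagate this to all of $W^{c, \sigma}_\mathbb{Z}(\lambda) = U_\mathbb{Z}(\lie g[t]^\sigma) v_\lambda$ via a divided-power analog of Kac's Lemma~3.4 (\emph{Infinite-Dimensional Lie Algebras}): since $\textup{ad}(x_{\alpha, 0}^-)$ acts locally nilpotently on $\lie g[t]^\sigma$ (being nilpotent on $\lie g$, hence on each $\lie g \otimes t^j$), the divided-power commutator expansion
\[
(x_{\alpha, 0}^-)^{(k)} \cdot y = \sum_{j \ge 0} \tfrac{(\textup{ad}\, x_{\alpha, 0}^-)^j(y)}{j!} \cdot (x_{\alpha, 0}^-)^{(k-j)}
\]
in $U_\mathbb{Z}(\lie g[t]^\sigma)$ (with integer coefficients by integrality of the adjoint divided-power action on the Chevalley basis) shows that if $(x_{\alpha, 0}^-)^{(k)} v = 0$ for $k \ge K$, then $(x_{\alpha, 0}^-)^{(k)}(yv) = 0$ for $k$ sufficiently large. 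Divided powers $y^{(l)}$ are treated via the Chevalley--Kostant identity
\[
f^{(m)} e^{(l)} = \sum_{s \ge 0} e^{(l-s)} \binom{-h - l - m + 2s}{s} f^{(m-s)}
\]
inside the $\lie{sl}_{2, \alpha}$-triple, and analogous integer-coefficient identities for basis elements outside this triple (where the commutators truncate by integer-valued structure constants). Consequently, $X := \{v \in W^{c,\sigma}_\mathbb{Z}(\lambda) : (x_{\alpha, 0}^-)^{(k)} v = 0 \text{ for all } k \gg 0\}$ is a $U_\mathbb{Z}(\lie g[t]^\sigma)$-submodule containing $v_\lambda$, hence equal to $W^{c,\sigma}_\mathbb{Z}(\lambda)$.

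The main obstacle is the integrality of the divided-power commutation identities needed for the propagation step: establishing stability of $X$ under all of $U_\mathbb{Z}(\lie g[t]^\sigma)$ requires integer-valued expansions of $(x_{\alpha, 0}^-)^{(k)}$-commutators with arbitrary divided powers of Chevalley basis elements of $\lie g[t]^\sigma$, which ultimately rests on the integrality of the divided-power adjoint action (Chevalley, Kostant) — the subtle point being that the usual Kac Lemma argument with ordinary powers must be re-run with divided powers in order to remain in the integral form and then base-change cleanly to fields of positive characteristic. Once these identities are in place, the submodule property of $X$ follows by straightforward combinatorial bookkeeping.
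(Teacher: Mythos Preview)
Your proposal is correct and follows essentially the same approach as the paper, which simply refers to \cite[Proposition~3.1.1]{JM1} for the argument. That reference carries out exactly the divided-power adaptation of Kac's Lemma~3.4 that you outline: weight considerations handle $(x_{\alpha,0}^+)^{(k)}$, the defining relations handle $(x_{\alpha,0}^-)^{(k)}$ on the generator, and the set of vectors on which the divided powers eventually vanish is shown to be a $U_\bb Z(\lie g[t]^\sigma)$-submodule via the integer-valued commutation identities in the Kostant--Garland--Mitzman integral form.
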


\begin{proof}
This proof is similar to the proof of \cite[Proposition~3.1.1]{JM1}.
\end{proof}

\begin{proposition} \label{prop:WZfg}
For any $\lambda \in P^+_0$, $W_\bb Z^{c, \sigma} (\lambda)$ is a finitely generated abelian group.
\end{proposition}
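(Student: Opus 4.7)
The plan is to combine a weight-space decomposition (which will have finite support by integrability) with Garland-type bounds on the $t$-degrees of the generators appearing in PBW monomials; the overall strategy is parallel to that of \cite[Proposition~3.1.2]{JM1} in the untwisted case, with the Garland identities replaced by their twisted versions in Lemma~\ref{basicreltw}. First, the triangular decomposition~\eqref{eq:Z.triang.dec} of $U_\bb Z(\lie g[t]^\sigma)$, together with the defining relations of $W_\bb Z^{c,\sigma}(\lambda)$ (which annihilate $U_\bb Z(\lie n^+[t]^\sigma)^0 v_\lambda$ and $U_\bb Z(\lie h[t]_+^\sigma)^0 v_\lambda$ and force $U_\bb Z(\lie h_0)$ to act via $\lambda$), yield $W_\bb Z^{c,\sigma}(\lambda) = U_\bb Z(\lie n^-[t]^\sigma) v_\lambda$, and hence $W_\bb Z^{c,\sigma}(\lambda)$ is spanned as an abelian group by the images of ordered PBW monomials of the form $\prod_j (x^-_{\mu_j,-r_j} \otimes t^{r_j})^{(k_j)} v_\lambda$. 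By Lemma~\ref{lem:WZint} this module is integrable as a $U_\bb Z(\lie g_0)$-module, so Theorem~\ref{t:rh} provides a weight decomposition $W_\bb Z^{c,\sigma}(\lambda) = \bigoplus_\mu W_\bb Z^{c,\sigma}(\lambda)_\mu$ whose support, being $\cal W_0$-invariant and bounded above by $\lambda$ in the $Q_0^+$-order, is contained in the finite set $\mathrm{conv}(\cal W_0 \cdot \lambda) \cap P_0$.

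Second, it therefore suffices to show that each weight space $W_\bb Z^{c,\sigma}(\lambda)_\mu$ is a finitely generated abelian group. Any monomial contributing to $W_\bb Z^{c,\sigma}(\lambda)_\mu$ must satisfy $\sum_j k_j \mu_j = \lambda - \mu$, with each $\mu_j$ in the finite set $\bigcup_\epsilon R^+_\epsilon \subseteq Q_0^+ \setminus \{0\}$; consequently the hyperdegree $\sum_j k_j$ and the multiset of root labels $\{\mu_j\}$ are both bounded in terms of $\lambda - \mu$, and the only a priori unbounded parameters are the $t$-degrees $(r_1, \ldots, r_n) \in \bb Z_{\ge0}^n$.

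The crux --- and the main obstacle --- is to bound these $t$-degrees via the Garland-type identities of Lemma~\ref{basicreltw}. Applying identity~(a) with $r = 0$ and $k > \lambda(h_{\mu,0})$ to $v_\lambda$, the defining relation $(x^-_{\mu,0})^{(k)} v_\lambda = 0$ kills the left-hand side, so that $(X^\sigma_{\mu;(0,s)}(u)^{(k-l)})_k\, v_\lambda = 0$ for every $l$; taking $l = k-1$ gives $(x^-_{\mu,-s(k-1)} \otimes t^{s(k-1)}) v_\lambda = 0$ for all $s \ge 1$, and letting $k$ range (with $s=1$) yields $(x^-_{\mu,-r} \otimes t^r) v_\lambda = 0$ for every $r \ge \lambda(h_{\mu,0})$. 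Using other values of $l$, and invoking identities~(b)--(c) of Lemma~\ref{basicreltw} to cover the long-root case in non-$A_{2n}$ types and the short-root/$A_{2n}$ case (which requires separate treatment of the additional generators $x^\pm_{2\mu,1} \otimes t^{2k+1}$), one obtains analogous relations expressing every $(x^-_{\mu,-r} \otimes t^r)^{(k')} v_\lambda$ with $r$ beyond an explicit threshold as a $\bb Z$-combination of monomials of strictly smaller $t$-degree or strictly smaller hyperdegree. An induction on hyperdegree, using Lemma~\ref{lem:comm1} to reorder arbitrary products at the cost of lower-hyperdegree correction terms, then propagates these bounds through products of generators and shows that in each $W_\bb Z^{c,\sigma}(\lambda)_\mu$ only finitely many tuples $(r_1,\ldots,r_n)$ can yield $\bb Z$-independent vectors. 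Combined with the finiteness of the weight support established in the first paragraph, this proves that $W_\bb Z^{c,\sigma}(\lambda)$ is finitely generated as an abelian group.
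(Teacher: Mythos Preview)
Your proposal is correct and follows essentially the same strategy as the paper's proof: use integrability (Lemma~\ref{lem:WZint}) and Theorem~\ref{t:rh} to confine the weight support to a finite set, then invoke the Garland-type identities of Lemma~\ref{basicreltw} together with the commutation Lemma~\ref{lem:comm1} and an induction on hyperdegree to bound the $t$-degrees appearing in the PBW monomials. One small imprecision: when you express $(x^-_{\mu,-r}\otimes t^r)^{(k')}v_\lambda$ via the Garland sum, the remaining terms do not have strictly smaller hyperdegree---they have the \emph{same} hyperdegree $d$ but break into several factors, i.e.\ lower \emph{exponent} $e=\max_j k_j$; the paper accordingly runs a double induction on the pair $(d,e)$ rather than on $d$ alone, and separates the four root-type cases (Lemma~\ref{basicreltw}(a),(b),(c)(i)--(iii)) explicitly.
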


\begin{proof}
Let $\Xi$ be the set of functions $\phi \colon \mathbb Z_{>0} \to Q^+ \times \bb Z_{\ge 0} \times \bb Z_{\ge 0}$, $\phi(j)=(\beta_j,r_j,k_j)$, such that: $\beta_j \in R^+_{-r_j}$ for all $j \in \bb Z_{>0}$, and $k_j=0$ for all $j$ sufficiently large.  Let $v$ denote the image of $1 \in U_\bb Z (\lie g[t]^\sigma)$ in $W_\bb Z^{c,\sigma} (\lambda)$, and recall that $W_\bb Z^{c,\sigma} (\lambda)$ is generated (as an abelian group) by elements of the form
\[
v_\phi
= (x_{\beta_1,-r_1}^- \otimes t^{r_1})^{(k_1)} \cdots (x_{\beta_n,-r_n}^- \otimes t^{r_n})^{(k_n)} v,
\quad \phi \in \Xi.
\]
Thus, the subgroup of $W_\bb Z^{c,\sigma} (\lambda)$ generated by elements $v_\phi$ with a fixed $\sum_{j>0} r_j$ and a fixed $\lie h_0$-weight ($\lambda - \sum_{j>0} k_j \beta_j$) is finitely generated.

By Lemma~\ref{lem:WZint}, we know that $W^{c,\sigma}_\bb Z (\lambda)$ is an integrable $U_\bb Z(\lie g_0)$-module.  Thus, Theorem~\ref{t:rh}\ref{t:rh.WinvZ} implies that the set $\wt_0 \left( W_\bb Z^{c,\sigma} (\lambda) \right)$ is $\cal W_0$-invariant.  Since the $\lie h_0$-weights of $W_\bb Z^{c,\sigma} (\lambda)$ are bounded above by $\lambda$, it follows that the set of weights of $W_\bb Z^{c,\sigma} (\lambda)$ is contained in that of $W_\bb Z (\lambda)$.  Since the set of weights of $W_\bb Z (\lambda)$ is finite, it follows that the set of weights of $W_\bb Z^{c,\sigma} (\lambda)$ is also finite.  Hence, the subgroup of $W_\bb Z^{c,\sigma} (\lambda)$ generated by elements $v_\phi$ with a fixed $\sum_{j>0} r_j$ is finitely generated.

In order to show that $W^{c,\sigma}_\bb Z(\lambda)$ is finitely generated, we will restrict the possibilities of $\sum_{j>0} r_j$ to finitely many distinct ones.  Define the exponent of $\phi \in \Xi$ to be $e(\phi)= \max\{k_j \mid j>0\}$, observe that $0 \leq e(\phi)\le d(\phi)$, and let
\begin{gather*}
\Xi_{d,e} = \{\phi\in \Xi \mid d(\phi)=d , e(\phi)=e\},
\qquad
\Xi_d= \bigcup_{0 \le e \le d} \Xi_{d,e} , \\
\Xi' = \{ \phi \in \Xi \mid \phi(j) = (\beta_j,r_j,k_j), \ r_j < d_{\beta_j}\lambda(h_{\beta_j,0}) \ \textup{ for all $j \in \bb Z_{>0}$} \} , \\[1em]
W' = \sum_{\phi \in \Xi'} \bb Z v_\phi \subseteq W_\bb Z^{c,\sigma} (\lambda), \qquad 
W_{d} = \sum_{\phi \in \Xi_d \cap \Xi'} \bb Z v_\phi \subseteq W', \\[1em]
\textup{where} \quad d_{\beta_j} = \begin{cases} 2, & \textup{if $\lie g \cong A_{2n}$ and $\beta_j \in R_{\sh}$}, \\ 1, & \textup{else}. \end{cases}
\end{gather*}
In order to finish the proof, we will show that $W' = W_\bb Z^{c,\sigma} (\lambda)$.

In fact, we will use induction on $d$ and $e$ in order to show that $v_\phi \in W'$ for all $\phi \in \Xi_{d,e}$, $d, e \ge 0$. If $d(\phi) = e(\phi) = 0$, then $v_\phi = v \in W'$. Assume that $d(\phi)>0$ and that the statement holds for all $\phi' \in (\cup_{d<d(\phi)}\Xi_{d}) \cup (\cup_{e<e(\phi)}\Xi_{d(\phi),e})$. The proof splits into two cases, according to whether $e(\phi)=d(\phi)$ or not.

Suppose $e(\phi)<d(\phi)$, in which case $v_\phi = (x_{\beta_1, -r_1}^- \otimes t^{r_1})^{(k_1)} \cdots (x_{\beta_n,-r_n}^- \otimes t^{r_n})^{(k_n)} v$ for some $n>1$ and $k_1, \ldots, k_n \neq 0$. By induction hypothesis, $(x_{\beta_2, -r_2}^- \otimes t^{r_2})^{(k_2)} \cdots (x_{\beta_n,-r_n}^- \otimes t^{r_n})^{(k_n)} v \in W'$. Thus, without loss of generality, we will assume that $r_j < d_{\beta_j}\lambda(h_{\beta_j,0})$ for all $j \in \{2, \dotsc, n\}$. Using Lemma~\ref{lem:comm1} repeatedly to commute $(x_{\beta_1,-r_1}^- \otimes t^{r_1})^{(k_1)}$ with $(x_{\beta_2,-r_2}^- \otimes t^{r_2})^{(k_2)}, \ldots, (x_{\beta_n,-r_n}^- \otimes t^{r_n})^{(k_n)}$, and induction hypothesis on the terms of hyperdegree strictly less than $d(\phi)$, we obtain that
\[
v_\phi + W' 
= (x_{\beta_2, -r_2}^- \otimes t^{r_2})^{(k_2)} \cdots (x_{\beta_n, -r_n}^- \otimes t^{r_n})^{(k_n)} (x_{\beta_1, -r_1}^- \otimes t^{r_1})^{(k_1)} v + W'.
\]
By induction hypothesis, $(x_{\beta_1, -r_1}^- \otimes t^{r_1})^{(k_1)} v \in W'$.  Since $r_2 < d_{\beta_2}\lambda (h_{\beta_2, 0}), \dots, r_n < d_{\beta_j}\lambda (h_{\beta_n, 0})$, it follows that
\[
(x_{\beta_2, -r_2}^- \otimes t^{r_2})^{(k_2)} \cdots (x_{\beta_n, -r_n}^- \otimes t^{r_n})^{(k_n)} (x_{\beta_1, -r_1}^- \otimes t^{r_1})^{(k_1)} v \in W'.
\]
Hence, $v_\phi \in W'$.

Now, suppose $e(\phi)=d(\phi)$, in which case $v_\phi = (x_{\beta,-r}^- \otimes t^r)^{(d)} v$.  Also suppose that $r \geq d_{\beta}\lambda(h_{\beta,0})$. We will split the rest of this proof into four cases:

\noindent
\textbf{Case 1:} Assume $\lie g \not\cong A_{2n}$ and $\beta \in R_{\sh}$ or $\lie g \cong A_{2n}$ and $\beta \in R_{\lng}$. Our goal is to show that $( x_{\beta, -r}^- \otimes t^r )^{(d)} v \in W'$ for all $d,r\in \mathbb N$ with $r \ge d_\beta \lambda(h_{\beta, 0})$. Replacing $k = l + d, r = 0, s = 1$ in Lemma \ref{basicreltw}(a) we get
\[
(x_{\beta,-1}^+ \otimes t)^{(l)} (x_{\beta,0}^- \otimes 1)^{(l+d)} v
= (-1)^l \left( \left(\sum_{r \ge 0} (x_{\beta, -r}^- \otimes t^r) u^{r+1}\right)^{(d)} \right)_{l+d} v.
\]
Since $(x_{\beta,0}^- \otimes 1)^{(\kappa)} v = 0$ for all $\kappa > \lambda(h_{\beta, 0})$, it follows that this last equality is zero
for all $l \ge \lambda(h_{\beta, 0})$. Thus
\begin{equation} \label{eq:prop.fin.dim.A}
\sum_{\substack{r_1n_1 + \dots + r_d n_d= l \\ n_1 + \dots + n_d= d }} (x_{\beta, -r_1}^- \otimes t^{r_1})^{(n_1)} \ldots (x_{\beta, -r_d}^- \otimes t^{r_d})^{(n_d)} v = 0.
\end{equation}
Set $l = d(\lambda(h_{\beta,0})+l'), l' \ge 0$. Since, from the case $e(\phi)<d(\phi)$,
\[
(x_{\beta, -r_1}^- \otimes t^{r_1})^{(d_1)} \ldots (x_{\beta, -r_{d'}}^- \otimes t^{r_{d'}})^{(d_{d'})} v \in W'
\]
for all $d'>1$, it follows, from Equation \eqref{eq:prop.fin.dim.A}, that $(x_{\beta, -r}^- \otimes t^{\lambda(h_{\beta,0})+l'})^{(d)} v \in W'$ for all $l' \ge 0$.

\noindent
\textbf{Case 2:} Assume $\lie g \not\cong A_{2n}$ and $\beta \in R_{\lng}$. Here our goal is to show that $( x_{\beta, 0 }^- \otimes t^{r} )^{(d)} v \in W'$ for all $d\in\mathbb N$ and $r\in m\mathbb N$ with $r \ge \lambda(h_{\beta, 0})$. Replacing $k = l + d, r = 0, s = 1$ in Lemma \ref{basicreltw}(b) we get
\[
(x_{\beta,0}^+ \otimes t^m)^{(l)} (x_{\beta,0}^- \otimes 1)^{(l+d)} v
= (-1)^l \left(\left(\sum_{r \ge 0}x_{\beta, 0}^- \otimes t^{mr} u^{r+1}\right)^{(d)}\right)_{l+d} v.
\]
Since $(x_{\beta,0}^- \otimes 1)^{(\kappa)} v = 0$ for all $\kappa > \lambda(h_{\beta, 0})$, it follows that this last equality is zero 
for all $l \ge \lambda(h_{\beta, 0})$. Thus
\begin{equation}  
\sum_{\substack{r_1n_1 + \dots + r_d n_d= l \\ n_1 + \dots + n_d= d }}(x_{\beta, 0}^- \otimes t^{mr_1})^{(n_1)} \ldots (x_{\beta, 0}^- \otimes t^{mr_d})^{(n_d)} v=0,
\end{equation}
and the conclusion follows as in the previous case.

\noindent
\textbf{Case 3:} Assume $\lie g \cong A_{2n}$ and $\beta \in 2R_{\sh}$.  For this case our goal is to show that $(x_{\beta, -r}^- \otimes t^r )^{(d)} v \in W'$ for all $d,r\in\mathbb N$ with $r \ge d_{\beta}\lambda(h_{\beta, 0})$ and $r$ an odd number. It is achievable proceeding in a similar way to the previous cases by using Lemma \ref{basicreltw}(c)(ii). 

 \noindent
 \textbf{Case 4:} Assume $\lie g \cong A_{2n}$ and $\beta \in R_{\sh}$. Our goal is to show that $(x_{\beta, -r}^- \otimes t^r )^{(d)} v \in W'$ for all $d,r\in\mathbb N$ with $r \ge d_{\beta}\lambda(h_{\beta, 0})$. The procedure with $r$ even is done by using Lemma \ref{basicreltw}(c)(i) also in a similar way to the other cases. Finally, the case with $r$ odd follows by Lemma \ref{basicreltw}(c)(iii) and by using the conclusion of Case 3.
\qedhere

\end{proof}

\begin{corollary} \label{cor:Weyl.Dem.fd}
For any $\lambda \in P_0^+$ and $\ell > 0$, $W^{c,\sigma}_\bb F (\lambda)$ and $D_\bb F^\sigma (\ell, \lambda)$ are finite-dimensional.
\end{corollary}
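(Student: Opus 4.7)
The plan is to deduce this corollary almost immediately from Proposition~\ref{prop:WZfg} combined with the base-change isomorphism $W^{c,\sigma}_\bb F(\lambda) \cong \bb F \otimes_\bb Z W^{c,\sigma}_\bb Z(\lambda)$ stated right after Definition~\ref{d:weyl}. Since Proposition~\ref{prop:WZfg} tells us that $W^{c,\sigma}_\bb Z(\lambda)$ is a finitely generated abelian group, applying $\bb F \otimes_\bb Z -$ produces a finitely generated $\bb F$-module; when $\bb F$ is a field, this means a finite-dimensional $\bb F$-vector space. So the first half, the finite-dimensionality of $W^{c,\sigma}_\bb F(\lambda)$, is a one-line consequence of the base change isomorphism and the preceding proposition.

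For the second half, I would invoke the observation (also made immediately after Definition~\ref{d:weyl}) that $D_\bb F^\sigma(\ell,\lambda)$ is a quotient of $W^{c,\sigma}_\bb F(\lambda)$. Indeed, comparing the defining relations, the generator of $W^{c,\sigma}_\bb F(\lambda)$ satisfies all the relations listed in \eqref{eq:defn.rel.dem}: for each $\alpha \in R_0^+$ and each $s \in \bb Z_{\ge 0}$, either $\max\{0,\lambda(h_{\alpha,0}) - \hat r_\alpha^\vee \ell s\}$ is at most $\lambda(h_{\alpha,0})$ (so the divided-power annihilation follows from the Weyl relation together with routine commutation via Lemma~\ref{basicreltw}), or the $A_{2n}$ variant is handled analogously. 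Consequently $D^\sigma_\bb F(\ell,\lambda)$, being a quotient of a finite-dimensional $\bb F$-vector space, is itself finite-dimensional.

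No serious obstacle is expected: the substantive work has already been done in Lemma~\ref{lem:WZint} and Proposition~\ref{prop:WZfg}. The only mildly delicate point is to verify cleanly that the extra relations imposed on $D^\sigma_\bb F(\ell,\lambda)$ beyond those of $W^{c,\sigma}_\bb F(\lambda)$ are automatically satisfied by the cyclic generator of the Weyl module, so that the surjection $W^{c,\sigma}_\bb F(\lambda) \twoheadrightarrow D^\sigma_\bb F(\ell,\lambda)$ really does exist. Once that is noted, the corollary follows in two sentences.
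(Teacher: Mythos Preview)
Your proposal is correct and essentially matches the paper's argument: both deduce finite-dimensionality of $W^{c,\sigma}_\bb F(\lambda)$ from Proposition~\ref{prop:WZfg} together with the base-change isomorphism, and then pass to $D_\bb F^\sigma(\ell,\lambda)$ via the quotient map noted after Definition~\ref{d:weyl}. The only difference is cosmetic: the paper takes the quotient over $\bb Z$ first and then base-changes, whereas you base-change first and then take the quotient over $\bb F$.

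One point of confusion in your write-up deserves correction, however. To exhibit $D_\bb F^\sigma(\ell,\lambda)$ as a quotient of $W^{c,\sigma}_\bb F(\lambda)$ you need the defining ideal of the Weyl module to be \emph{contained in} that of the Demazure module, which is immediate because the relations \eqref{eq:defn.rel.dem} visibly include all the Weyl relations (take $s=0$). You have the direction reversed: you write that ``the generator of $W^{c,\sigma}_\bb F(\lambda)$ satisfies all the relations listed in \eqref{eq:defn.rel.dem},'' which would instead make the Weyl module a quotient of the Demazure module. That stronger claim is generally false (it is precisely the content of Theorem~\ref{t:wd} in the special case $\ell=1$, and fails for other $\ell$), and in any case it is not needed here. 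Simply drop the ``Indeed'' clause and the ``mildly delicate point'' remark; the surjection $W^{c,\sigma}_\bb F(\lambda)\twoheadrightarrow D_\bb F^\sigma(\ell,\lambda)$ is automatic from the containment of ideals, and the corollary then follows in two lines.
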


\begin{proof}
Recall that $W^{c,\sigma}_\bb F (\lambda) \cong \bb F \otimes_\bb Z W_\bb Z^{c, \sigma} (\lambda)$. Since $W^{c, \sigma}_\bb Z (\lambda)$ is finitely generated, the dimension of $W^{c, \sigma}_\bb F (\lambda)$ is at most the number of generators of $W^{c, \sigma}_\bb Z ( \lambda)$.  Now recall that $D_\bb Z^\sigma (\ell, \lambda)$ is a quotient of $W_\bb Z^{c, \sigma} (\lambda)$, for all $\ell > 0$ and $\lambda \in P_0^+$. Since $W_\bb Z^{c, \sigma} (\lambda)$ is finitely generated, it follows that $D_\bb Z^\sigma (\ell, \lambda)$ is also finitely generated.  Finally, recall that $D^{\sigma}_\bb F (\ell, \lambda) \cong \bb F \otimes_\bb Z D_\bb Z^{\sigma} (\ell, \lambda)$, and, since $D_\bb Z^\sigma (\ell, \lambda)$ is finitely generated, it follows that $D^\sigma_\bb F (\ell, \lambda)$ is also finite-dimensional.
\end{proof}

\subsection{The category of $\bb Z$-graded finite-dimensional $U_\bb F (\lie g[t]^\sigma)$-modules}

Let $\cal G_\bb F^\sigma$ be the category of $\bb Z$-graded finite-dimensional $U_\bb F (\lie g[t]^\sigma)$-modules. Given a module $V$ in $\cal G_\bb F^\sigma$, let $V[r]$ denote its $r$-th graded piece, and given $s \in \bb Z$, let $\tau_s V$ be the module in $\cal G_\bb F^\sigma$ satisfying $(\tau_s V) [r] = V[r-s]$ for all $r \in \bb Z$. For each finite-dimensional $U_\bb F (\lie g_0)$-module $V$, let $\ev_0 V$ be the module in $\cal G_\bb F^\sigma$ obtained by inflating the action of $U_\bb F (\lie g_0)$ to $U_\bb F (\lie g[t]^\sigma)$ by setting $U_\bb F (\lie g [t]^\sigma_+) V = 0$. For each $r \in \bb Z$, set $\ev_r = \tau_r \circ \ev_0$, and for each $\lambda \in P_0^+$, set $V_\bb F (\lambda,r) = \ev_r V_\bb F(\lambda)$.

\begin{theorem}\ 
\begin{enumerate}[(a), leftmargin=*]
\item If $V$ is a simple module in $\cal G_\bb F^\sigma$, then $V$ is isomorphic to $V_\mathbb F(\lambda,r)$ for a unique $(\lambda,r)\in P_0^+\times\bb Z$.
\item For every $\lambda \in P^+_0$, $W^{c,\sigma}_\mathbb F(\lambda)$ is in $\cal G_\bb F^\sigma$.
\item Let $\lambda \in P^+_0$. If $V$ is a module in $\cal G_\bb F^\sigma$ generated by a nonzero vector $v$ satisfying
\begin{equation} \label{eq:thm.3.4.c}
U_\bb F (\lie n^+[t]^\sigma)^0 v = U_\bb F(\lie h[t]_+^\sigma)^0 v=0
\quad \textup{and} \quad
hv = \lambda(h)v
\quad 
\textup{for all }
h \in U_\bb F(\lie h_0),
\end{equation}
then $V$ is a quotient of $W^{c,\sigma}_\bb F(\lambda)$.
\end{enumerate}
\end{theorem}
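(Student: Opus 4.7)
For part (a), the plan is to exploit that $U_\bb F(\lie g[t]^\sigma)$ is non-negatively graded by $t$-degree, with degree-zero piece $U_\bb F(\lie g_0)$. Given a simple $V$ in $\cal G_\bb F^\sigma$, finite-dimensionality produces a maximal $r_0 \in \bb Z$ with $V[r_0] \ne 0$; the positive-degree part of $U_\bb F(\lie g[t]^\sigma)$ strictly raises grading and therefore annihilates $V[r_0]$, while $U_\bb F(\lie g_0)$ preserves it, so $V[r_0]$ is a $U_\bb F(\lie g[t]^\sigma)$-submodule of $V$, and simplicity forces $V = V[r_0]$. In particular the positive-degree part of $U_\bb F(\lie g[t]^\sigma)$ acts as zero on $V$, so every $U_\bb F(\lie g_0)$-submodule is automatically $U_\bb F(\lie g[t]^\sigma)$-stable, whence $V$ is simple as a $U_\bb F(\lie g_0)$-module. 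Theorem~\ref{t:rh} then yields a unique $\lambda \in P_0^+$ with $V \cong V_\bb F(\lambda)$; since the shift $r_0$ is pinned down by the grading, the pair $(\lambda, r_0)$ is unique and $V \cong V_\bb F(\lambda, r_0)$.

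For part (b), I would verify that the left ideal defining $W_\bb F^{c,\sigma}(\lambda)$ is generated by $t$-homogeneous elements: both $U_\bb F(\lie n^+[t]^\sigma)^0$ and $U_\bb F(\lie h[t]_+^\sigma)^0$ split into their $t$-graded pieces, while the relations $h - \lambda(h)$ and $(x^-_{\alpha, 0})^{(k)}$ lie in $t$-degree zero. Hence the quotient inherits a $\bb Z_{\ge 0}$-grading with canonical generator in degree zero, and finite-dimensionality is supplied by Corollary~\ref{cor:Weyl.Dem.fd}, placing $W_\bb F^{c,\sigma}(\lambda)$ in $\cal G_\bb F^\sigma$.

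For part (c), the plan is to show that $v$ satisfies every defining relation of $W_\bb F^{c,\sigma}(\lambda)$, so that $1 \mapsto v$ descends to a surjection; the only relations not immediate from the hypothesis~\eqref{eq:thm.3.4.c} are $(x^-_{\alpha, 0})^{(k)} v = 0$ for $\alpha \in R_0^+$ and $k > \lambda(h_{\alpha, 0})$. For each such $\alpha$, the subalgebra $\lie s_\alpha \subseteq \lie g_0$ spanned by $\{x^\pm_{\alpha, 0}, h_{\alpha, 0}\}$ is an $\lie{sl}_2$-copy, and the cyclic $U_\bb F(\lie s_\alpha)$-submodule of $V$ generated by $v$ is finite-dimensional. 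Since $v$ is annihilated by $x^+_{\alpha, 0} \in U_\bb F(\lie n^+[t]^\sigma)^0$ and has integral weight $\lambda(h_{\alpha, 0}) \ge 0$, the standard $\lie{sl}_2$-hyperalgebra theory (Theorem~\ref{t:rh} applied to $\lie s_\alpha$) forces the desired vanishing. The step I expect to require the most care is this $\lie{sl}_2$ reduction when $\bb F = \bb Z$, where the hypothesis in Theorem~\ref{t:rh}(d) that the module be a free $\bb F$-module of finite rank has to be extracted from mere finite-dimensionality; this can be arranged by first invoking Theorem~\ref{t:rh}(a)--(b) to secure a weight decomposition with weights in $P_0$, and then applying the $\lie{sl}_2$ vanishing on each weight string.
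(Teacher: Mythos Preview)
Your proposal is correct and follows essentially the same approach as the paper. Two minor remarks: in part~(c) the paper reduces instead to the universal property of the \emph{untwisted} graded Weyl module for $U_\bb F(\lie g_0[t])$ (via Lemma~\ref{isos}) rather than to Theorem~\ref{t:rh}(d) for $\lie{sl}_2$ directly, and your caveat about $\bb F = \bb Z$ is unnecessary, since $\cal G_\bb F^\sigma$ is by definition a category of finite-dimensional modules, so $\bb F$ is a field throughout this theorem.
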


\begin{proof}
To prove part (a), suppose $V[r], V[s] \neq 0$ for some $s < r \in \bb Z$. In that case, $\oplus_{k\geq r} V [k]$ would be a proper submodule of $V$, contradicting the fact that it is simple. Thus there must exist a unique $r \in \bb Z$ such that $V[r] \neq 0$. Since $uv \in V[s]$, $s>r$, for all $u \in U_\bb F (\lie g[t]^\sigma_+)^0$ and $v \in V = V[r]$, it follows that $U_\bb F (\lie g[t]^\sigma_+)^0 V = 0$.  Thus $V$ is in fact the inflation of a simple $U_\bb F (\lie g_0)$-module; that is, $V \cong V_\bb F (\lambda,r)$ for some $\lambda \in P_0^+$ and $r\in \bb Z$.

Part (b) follows directly from Corollary \ref{cor:Weyl.Dem.fd}.

To prove part (c), observe that the $U_\bb F (\lie g_0 [t])$-submodule $V' = U_\bb F (\lie g_0 [t]) v \subseteq V$ is a graded, finite-dimensional, highest-weight module of highest weight $\lambda$. Thus $V'$ is a quotient of the graded Weyl module for $U_\bb F (\lie g_0[t])$ of highest weight $\lambda$. Using Lemma \ref{isos}, the statement follows by comparing the defining relations of $W^{c,\sigma}_\bb F(\lambda)$ with those in \eqref{eq:thm.3.4.c}. 
\end{proof}

\subsection{Joseph-Mathieu-Polo relations for Demazure modules} \label{ss:Mrel}

We now explain the reason why we call $D_\mathbb F^\sigma (\ell, \lambda)$ Demazure modules.  In order to do that, we need the concepts of weight vectors, weight spaces, weight modules and integrable modules for $U_\bb F(\hlie g')$ which are similar to those for $U_\bb F (\lie g_0)$ (cf. Subsection~\ref{sec:rev.g}) by replacing $P_0$ by $\hat P'$. Also, using an analogue of \eqref{eq:P.in.U_Fh}, we obtain an inclusion $\hat P'\hookrightarrow U_\mathbb F(\hlie h')^*$. Let $V$ be a $\bb Z$-graded $U_\bb F(\hlie g')$-module whose weights are in $\hat P'$. As before, let $V[r]$ denote the $r$-th graded piece of $V$. For $\mu\in\hat P$, say $\mu = \mu' + n\delta$ with $\mu' \in \hat P'$, $n \in \bb Z$, denote
\[
V_\mu = \{ v \in V[n] \mid hv=\mu'(h)v \textup{ for all } h\in U_\bb F(\hlie h')\}
\qquad \textup{and} \qquad
\wt(V) = \{\mu\in\hat P \mid V_\mu\ne 0\}.
\]

The next result is a partial twisted affine analogue of Theorem \ref{t:rh}.

\begin{theorem} \label{t:rha}
Let $V$ be a graded $U_\mathbb F(\hlie g')$-module.
\begin{enumerate}[(a), leftmargin=*]
\item \label{t:rha.a}
If $V$ is integrable, then $V$ is a weight-module and $\wt(V) \subseteq\hat P$. Moreover, $\dim V_\mu = \dim V_{w\mu}$ for all $w \in \widehat{\cal W}$, $\mu \in \hat P$.

\item If $V$ is a highest-weight module of highest weight $\lambda \in \hat P^+$, then $\dim(V_{\lambda})=1$ and $V_{\mu}\ne 0$ only if $\mu\le \lambda$. Moreover, $V$  has a unique maximal proper submodule and, hence, also a unique irreducible quotient. In particular, $V$ is indecomposable.

\item{\label{t:rh.ca}} Let $\Lambda\in\hat P^+$ and $n=\Lambda(d)$. Then, the $U_\mathbb F(\hlie g')$-module $\widehat{M}_\bb F (\Lambda)$ generated by a vector $v$ of degree $n$ satisfying the defining relations
\begin{equation*}
U_\mathbb F (\hlie n^+)^0v=0, \quad
hv=\Lambda(h)v \quad \textup{and} \quad 
(x_{\alpha, 0}^-)^{(k)}v=0, \quad
\text{for all $h\in U_\bb F(\hlie h')$, $\alpha \in R_0^+$, $k>\Lambda(h_{\alpha, 0})$},
\end{equation*}
is nonzero and integrable.  Furthermore, every integrable highest-weight module of highest weight $\Lambda$ is a quotient of $\widehat{M}_\bb F (\Lambda)$.
\hfill \qed
\end{enumerate}
\end{theorem}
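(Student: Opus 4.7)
My plan is to prove the three parts of Theorem~\ref{t:rha} by adapting the standard integrable highest-weight theory for Kac--Moody algebras to the twisted affine hyperalgebra setting. The main tools are the PBW triangular decomposition $U_\bb F(\hlie g') = U_\bb F(\hlie n^-)\, U_\bb F(\hlie h')\, U_\bb F(\hlie n^+)$, the $\lie{sl}_2$-subalgebras attached to each affine simple root, the twisted Garland-type identities of Lemma~\ref{basicreltw}, and the $\bb Z$-form arguments already exploited in Section~\ref{sec:rev.g}.

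For part~\ref{t:rha.a}, I would first observe that integrability forces each $(x^\pm_{\alpha_i,0})^{(k)}$, $i\in\hat I$, to act locally nilpotently on $V$. Running the standard $\lie{sl}_2$-divided-power construction inside $U_\bb F(\hlie g')$ (the hyperalgebra analogue of ``$\exp(e_i)\exp(-f_i)\exp(e_i)$'', as in the argument behind Theorem~\ref{t:rh}\ref{t:rh.WinvZ}) produces, for each $i\in\hat I$, an $\bb F$-linear automorphism of $V$ carrying $V_\mu$ isomorphically onto $V_{s_i\mu}$. Combined with the $\bb Z$-grading this yields the weight-space decomposition of $V$ with weights in $\hat P$; since $\widehat{\mathcal W}$ is generated by $\{s_i \mid i\in\hat I\}$, the claimed $\widehat{\mathcal W}$-invariance of $\dim V_\mu$ follows.

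Part~(b) I would prove by the standard triangular-decomposition argument applied to the highest-weight generator $v$: the relation $U_\bb F(\hlie n^+)^0 v = 0$ together with triangularity gives $V = U_\bb F(\hlie n^-)v$, whence $V_\mu \ne 0$ forces $\mu\le\lambda$ and the $\lambda$-weight space is $\bb F v$ (the $0$-weight part of $U_\bb F(\hlie n^-)$ is spanned by $1$). The unique maximal proper submodule is then the sum of all proper submodules; this sum is itself proper because a submodule is proper iff its $\lambda$-weight part is $0$, and this property is closed under sums.

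The heart of the argument is part~\ref{t:rh.ca}. I would construct $\widehat{M}_\bb Z(\Lambda)$ by imposing the analogous relations over $\bb Z$ and identify it with $U_\bb Z(\hlie n^-)v$; the PBW-style $\bb Z$-basis of $U_\bb Z(\hlie n^-)$ built from the divided powers of $\cal C^\sigma(O)$ then shows that $\widehat{M}_\bb Z(\Lambda)$ is a free nonzero $\bb Z$-module whose $\Lambda$-weight space is $\bb Z v$, so $\widehat{M}_\bb F(\Lambda)\cong\bb F\otimes_\bb Z\widehat{M}_\bb Z(\Lambda)$ is nonzero and the universal property is immediate from the defining relations. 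For integrability, the imposed relations provide local nilpotence of $(x^-_{\alpha_i,0})^{(k)}$ for $i\in I_0$; the nontrivial step is the affine simple root, i.e.\ showing $(x^-_{\alpha_0,0})^{(k)}v=0$ for $k$ sufficiently large. I would handle this by restricting to the twisted $\lie{sl}_2[t]^\sigma$- or $\lie{sl}_3[t]^\tau$-subalgebra attached to $\theta_1$ (Lemma~\ref{isos}), noting that $v$ is a highest-weight vector of dominant integral highest weight $\Lambda(h_0)\in\bb Z_{\ge 0}$ for the $\alpha_0$-$\lie{sl}_2$, and using the twisted Garland-type identities of Lemma~\ref{basicreltw} to translate the finite-root relations at $\theta_1$-type roots into the required annihilation in the negative-$t$ direction. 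Propagation from $v$ to all of $\widehat{M}_\bb F(\Lambda)$ is then standard: the locus on which a given divided power of a root vector is locally nilpotent is $U_\bb F(\hlie g')$-stable. The main obstacle I anticipate is precisely this last step --- extracting integrability at $\alpha_0$ from purely finite-root relations --- where the twisted Chevalley-basis case distinctions (especially in type $A_{2n}$, where $\theta_1=2\theta_0\notin R_0^+$ forces the extra analysis furnished by Lemma~\ref{basicreltw}(c)) do the real technical work.
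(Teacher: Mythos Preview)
The paper states Theorem~\ref{t:rha} without proof (the \qed appears inside the enumerate environment itself), so there is no argument in the paper to compare yours against. Your outlines for parts~\ref{t:rha.a} and~(b) are the standard ones and are fine.

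For part~\ref{t:rh.ca}, your plan has a real gap at precisely the point you flag as the main obstacle. As the relations are written, only $(x^-_{\alpha,0})^{(k)}v=0$ for $\alpha\in R_0^+$ is imposed, and you propose to \emph{derive} the vanishing of $(x^-_{\alpha_0})^{(k)}v$ for $k>\Lambda(h_0)$ from these via Lemmas~\ref{isos} and~\ref{basicreltw}. This cannot work: those lemmas live entirely in $U_\bb F(\lie g[t]^\sigma)$, i.e.\ in non-negative powers of $t$, whereas $x^-_{\alpha_0}=x^+_{\theta_1,1}\otimes t^{-1}$ sits in the negative-$t$ part of $\tlie g^\sigma$ and is inaccessible to them. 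Worse, the relation at $\alpha_0$ does \emph{not} follow from the finite ones in general. Already for $\lie g=\lie{sl}_2$ with $\sigma={\rm id}$ and $\Lambda=\Lambda_0$ one has $\Lambda(h_1)=0$, so the only finite relation is $f_1 v=0$; but $f_0^{(2)}v$ is a singular vector of weight $\Lambda_0-2\alpha_0$, and a weight count shows that $U(\hlie n^-)$ contains no element of weight $3\alpha_1-2\delta$, so $f_0^{(2)}v\notin U(\hlie g')f_1 v$ and the quotient is not integrable. The intended presentation almost certainly imposes the analogous relation at $\alpha_0$ as well (equivalently, at every $i\in\hat I$, matching the finite-type Theorem~\ref{t:rh}\ref{t:rh.d}); with that reading your ``main obstacle'' is a defining relation rather than something to be proved, and the remainder of your argument for part~\ref{t:rh.ca} goes through.
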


Given $\Lambda \in \hat P^+$, traditionally, a Demazure module is defined to be the $U_\bb F(\hlie b'^+)$-submodule $V_\mathbb F^w(\Lambda) \subseteq \widehat{M}_{\bb F} (\Lambda)$ generated by the weight space $\widehat{M}_\bb F (\Lambda)_{w\Lambda}$ for some $w \in \widehat{\cal W}$ (cf. \cite{mathieu89, foli:weyldem, naoi:weyldem, FK13}).  Our focus is on Demazure modules that are stable under the action of $U_\bb F(\lie g_0)$.  Since $V_\bb F^w(\Lambda)$ is defined as a $U_\bb F (\hlie b'^+)$-module, it is stable under the action of $U_\bb F(\lie g_0)$ if, and only if,
\begin{equation}\label{e:n-inv}
U_\mathbb F(\lie n_0^-)^0 \widehat{M}_\bb F(\Lambda)_{w\Lambda}=0 .
\end{equation}
In particular, since $V_\bb F^w(\Lambda)$ is an integrable $U_\bb F(\lie{g}_0)$-module, it follows that $(w\Lambda)(h_{\alpha,0}) \le 0$ for all $\alpha \in R_0^+$.  Conversely, using the exchange condition for Coxeter groups (see \cite[Section 5.8]{humphreys90}), for all $i \in \hat I$, we have
\[
(x_{\alpha_i}^\epsilon)^{(k)} \widehat{M}_\bb F(\Lambda)_{w\Lambda} = 0
\qquad \textup{for all} \quad k>0,
\]
where $\epsilon=+$ if $w\Lambda(h_{\alpha_i}) \ge 0$ and $\epsilon=-$ if $w\Lambda(h_{\alpha_i})\le 0$. Thus, if $w\Lambda (h_{\alpha, 0}) \le 0$ for all $\alpha \in R_0^+$, then $V_\bb F^w(\Lambda)$ is $U_\bb F(\lie g_0)$-stable.

Henceforth, assume that $(w\Lambda)(h_{\alpha,0}) \le 0$ for all $\alpha \in R^+_0$, and observe that this implies that $w\Lambda$ must have the form
\begin{equation}\label{e:thelevel}
w\Lambda = \ell \Lambda_0 - \lambda + n \delta
\qquad \textup{for some $\lambda \in P_0^+$, $n \in \bb Z$, and $\ell = \Lambda(c) \ge 0$}.
\end{equation}
Conversely, given $\ell \in\bb Z_{\ge 0}$, $\lambda\in P_0^+$ and $n \in \bb Z$, since $\widehat{\cal W}$ acts simply transitively on the set of alcoves of $\hlie h^\ast$ (see \cite[Theorem 4.5.(c)]{humphreys90}), there exists a unique $\Lambda \in \hat{P}^+$ such that $\ell \Lambda_0 - \lambda + n \delta \in \widehat{\cal W} \Lambda$. Thus, if $w \in \widehat{\cal W}$ and $\Lambda \in \hat{P}^+$ are such that
\begin{equation}\label{e:notcon}
w\Lambda = \ell\Lambda_0 - \lambda + n \delta,
\end{equation}
then $V_\bb F^w(\Lambda)$ is $U_\bb F(\lie g_0)$-stable.  Henceforth, we fix $w$, $\Lambda$, $\lambda$ and $n$ as in \eqref{e:notcon}.
Notice that, if $\gamma = \pm \alpha + s \delta \in \hat R^+$, $\alpha \in R_0^+$, $s \ge 0$, then
\[
w\Lambda(h_\gamma) = 
\begin{cases}
\mp \delta_{s, 1} \lambda(h_{\frac{\alpha}{2}, 0}) + s \ell \hat r^\vee_\alpha, & \textup{if $\lie g \cong A_{2n}$ and $\alpha \in 2 R_{\sh}$}, \\
\mp \lambda(h_{\alpha,0}) + s \ell \hat r^\vee_\alpha, & \textup{otherwise}.
\end{cases}
\]

The following lemma is a rewriting of \cite[Lemme~26]{mathieu89} using the above fixed notation. (Compare it with \cite[Theorem~1]{foli:weyldem}, \cite[Proposition~4.8]{FK13}  and \cite[Lemma~3.5.3]{BMM15}.)

\begin{lemma}\label{l:jpmrel} 
The $U_\bb F(\lie g[t]^\sigma)$-module $V_\bb F^w(\Lambda)$ is isomorphic to a $U_\bb F(\lie g[t]^\sigma)$-module generated by a vector $v$ of degree $n$ satisfying the following defining relations:
\begin{alignat}{2}\label{e:MFLrel+}
&U_\bb F(\lie n^-[t]^\sigma)^0v
= U_\bb F(\lie h [t]_+^\sigma)^0v
= &&0, \qquad \qquad
hv = -\lambda(h)v \quad \textup{for all $h \in U_\bb F(\lie h)$}, \\
&(x_{\alpha,-s}^+ \otimes t^{s})^{(k)}v=0
\quad \textup{for all} \quad && \alpha\in R_0^+,\ s \ge 0, \notag \\
& &&k > 
\begin{cases}
\max \{ 0, \delta_{s, 1} \lambda(h_{\frac{\alpha}{2}, 0}) - s \ell \hat r^\vee_\alpha \}, & \textup{if $\lie g \cong A_{2n}$ and $\alpha \in 2 R_{\sh}$}, \\
\max \{ 0, \lambda(h_{\alpha,0}) - s \ell \hat r^\vee_\alpha \}, & \textup{otherwise}. \notag
\end{cases}
\end{alignat} 
\end{lemma}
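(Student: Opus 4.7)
The plan is to take $v$ to be the extremal weight vector $v_{w\Lambda} \in \widehat{M}_\bb F(\Lambda)_{w\Lambda}$, to verify that it satisfies the relations \eqref{e:MFLrel+}, and then to check their universality. The key input throughout is Mathieu's refinement of integrability \cite[Lemme~26]{mathieu89}: for every positive real affine root $\gamma \in \hat R^+$,
\[
(x^+_\gamma)^{(k)} v = 0 \quad \text{whenever } k > \max\{0, -w\Lambda(h_\gamma)\}.
\]
Most of the work will be translating this statement into the notation of the twisted current subalgebra $\lie g[t]^\sigma$.

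First I would check that $V^w_\bb F(\Lambda) = U_\bb F(\lie g[t]^\sigma) v$. By definition $V^w_\bb F(\Lambda) = U_\bb F(\hlie b'^+) v$; the standing assumption $(w\Lambda)(h_{\alpha,0}) \leq 0$ for $\alpha \in R_0^+$ makes this module stable under $U_\bb F(\lie n^-_0)$, and the identity $\hlie b'^+ + \lie n^-_0 = \lie g[t]^\sigma + \C c$ together with the fact that $c$ acts by the scalar $\ell$ then yields $V^w_\bb F(\Lambda) = U_\bb F(\lie g[t]^\sigma) v$, with $v$ in degree $n = w\Lambda(d)$. Next, I would verify \eqref{e:MFLrel+} for $v$. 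The weight condition is immediate from $w\Lambda|_{\lie h_0} = -\lambda$. For the bounds on $(x^+_{\alpha, -s} \otimes t^s)^{(k)}$, I apply Mathieu's formula to $\gamma = \alpha + s\delta$ with $\alpha \in R_0^+$ and $s \geq 0$; the Chevalley identifications of Subsection \ref{s:chevalley} give $x^+_\gamma = x^+_{\alpha, -s} \otimes t^s$ up to normalization, and the computation $-w\Lambda(h_\gamma) = \lambda(h_{\alpha, 0}) - s\ell \hat r^\vee_\alpha$ (with the $A_{2n}$, $\alpha \in 2R_{\sh}$ variant involving $\delta_{s,1} \lambda(h_{\alpha/2, 0})$) matches the stated bound. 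For $U_\bb F(\lie n^-[t]^\sigma)^0 v = 0$, I apply the same formula to $\gamma = -\alpha + r\delta$ with $r \geq 0$: here $x^+_\gamma$ corresponds to $x^-_\alpha \otimes t^r$, and $-w\Lambda(h_\gamma) \leq 0$, so every $k > 0$ kills $v$. The imaginary-root relation $U_\bb F(\lie h[t]_+^\sigma)^0 v = 0$ then follows by writing each $h_{\alpha, \epsilon} \otimes t^r$ with $r > 0$ as a commutator of real-root vectors that already annihilate $v$, using the commutator formulas at the end of Subsection \ref{s:chevalley}.

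For universality, any $U_\bb F(\lie g[t]^\sigma)$-module $V$ generated by a vector $v'$ satisfying \eqref{e:MFLrel+} extends uniquely to a $U_\bb F(\hlie b'^+)$-module with $c \mapsto \ell$ and $d \mapsto n$, so Mathieu's characterization yields a surjection $V^w_\bb F(\Lambda) \twoheadrightarrow V$, completing the isomorphism. The main obstacle I anticipate is the bookkeeping for the $A_{2n}$ case: the Chevalley normalizations of Subsection \ref{s:chevalley} introduce factors of $\sqrt{2}$ and $2$ that must be tracked carefully through the identification of $x^\pm_{\alpha + s\delta}$ with divided powers of $x^\pm_{\alpha, -s} \otimes t^s$, and analogously for $h_{\alpha + s\delta}$, in order to match the bounds in \eqref{e:MFLrel+} exactly. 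A secondary technicality is the transfer of Mathieu's characteristic-zero statement to the integral form $U_\bb Z$, which is routine given the monomial bases constructed earlier in Section~\ref{s:algs}.
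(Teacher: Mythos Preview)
The paper does not supply its own proof of this lemma: it is stated there as ``a rewriting of \cite[Lemme~26]{mathieu89} using the above fixed notation,'' with pointers to the analogous statements in \cite{foli:weyldem}, \cite{FK13}, and \cite{BMM15}. So there is no argument in the paper to compare against; the lemma is imported wholesale from Mathieu.

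Your proposal is essentially a sketch of what that translation from Mathieu's setting to the present one involves, and the outline is sound. The computations of $-w\Lambda(h_\gamma)$ for $\gamma = \pm\alpha + s\delta$ that you invoke are exactly the ones recorded in the paper immediately before the lemma, and the identification $x^+_{\pm\alpha + s\delta} = x^\pm_{\alpha,-s}\otimes t^s$ is the one set up in Section~\ref{s:algs}. Two small points worth tightening: first, for $\gamma = -\alpha + r\delta$ you need $r>0$ to have $\gamma\in\hat R^+$, so the degree-zero piece $U_\bb F(\lie n_0^-)^0 v = 0$ must come from the standing hypothesis \eqref{e:n-inv} rather than from Mathieu's formula (you allude to this earlier but then write $r\ge 0$). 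Second, the universality direction is really the heart of Mathieu's result: showing that the relations \eqref{e:MFLrel+} are \emph{defining} (not merely satisfied) is precisely the content of \cite[Lemme~26]{mathieu89}, so your appeal to ``Mathieu's characterization'' at that step is circular unless you mean to invoke the full force of that lemma---in which case the rest of your argument is just the (correct) dictionary between his notation and the paper's.
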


The following is the main result of this subsection.

\begin{proposition}\label{p:Demrels}
The graded $U_\bb F(\lie g[t]^\sigma)$-modules $V_\bb F^w(\Lambda)$ and $D_\bb F^\sigma(\ell, \lambda, n)$ are isomorphic.
\end{proposition}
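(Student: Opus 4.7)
I plan to deduce the proposition from Lemma~\ref{l:jpmrel} by means of the Chevalley-type automorphism $\psi$ of $U_\bb F(\lie g[t]^\sigma)$. The essential point is a direct inspection: applying $\psi$ to the defining relations of $D_\bb F^\sigma(\ell,\lambda,n)$ in Definition~\ref{d:weyl}, and invoking \eqref{e:inv+-def} and \eqref{e:inv+-w} together with the facts that $\psi$ interchanges $\lie n^+[t]^\sigma$ and $\lie n^-[t]^\sigma$ while preserving $\lie h[t]_+^\sigma$, converts each relation into one of the relations listed in \eqref{e:MFLrel+} with the same numerical bounds on divided powers; in particular, the negation of the $\lie h_0$-weight $\lambda$ arising in \eqref{e:inv+-w} is exactly what turns $hv=\lambda(h)v$ into $hv=-\lambda(h)v$.

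Concretely, for any graded $U_\bb F(\lie g[t]^\sigma)$-module $M$ I will consider the $\psi$-twist $M^\psi$: the same $\bb Z$-graded $\bb F$-module with action $u\cdot_\psi m := \psi(u)m$. Because $\psi$ preserves the $\bb Z$-grading (it acts as the identity on the $t$-factor) and $\psi^2$ is the identity on the Chevalley generators, $M^\psi$ is a graded $U_\bb F(\lie g[t]^\sigma)$-module and $(M^\psi)^\psi = M$. The inspection above then shows that the cyclic generator of $D_\bb F^\sigma(\ell,\lambda,n)^\psi$ satisfies the relations \eqref{e:MFLrel+}, yielding a graded surjection $V_\bb F^w(\Lambda) \twoheadrightarrow D_\bb F^\sigma(\ell,\lambda,n)^\psi$; the symmetric argument, applied to the presentation in Lemma~\ref{l:jpmrel}, yields a surjection $D_\bb F^\sigma(\ell,\lambda,n) \twoheadrightarrow V_\bb F^w(\Lambda)^\psi$. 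Twisting the second surjection by $\psi$, composing it with the first, and invoking the finite-dimensionality provided by Corollary~\ref{cor:Weyl.Dem.fd}, one obtains a surjective endomorphism of a finite-dimensional space, which is therefore an isomorphism; hence both surjections are isomorphisms and $V_\bb F^w(\Lambda) \cong D_\bb F^\sigma(\ell,\lambda,n)^\psi$.

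The main obstacle is the final step: upgrading this to the asserted isomorphism $V_\bb F^w(\Lambda) \cong D_\bb F^\sigma(\ell,\lambda,n)$, which requires identifying $D_\bb F^\sigma(\ell,\lambda,n)^\psi$ with $D_\bb F^\sigma(\ell,\lambda,n)$ itself. Fortunately, in the present twisted setting $\lie g_0$ is always one of $B_n$, $C_n$, $F_4$, $G_2$, for which $w_0=-\mathrm{id}$ on $\lie h_0$ and every finite-dimensional irreducible $\lie g_0$-module is self-dual; I expect to produce the needed intertwiner by choosing a lift $\widetilde w_0 \in U_\bb F(\lie g_0)$ of $w_0$, sending the generator of $D_\bb F^\sigma(\ell,\lambda,n)^\psi$ (which has $\lie h_0$-weight $-\lambda = w_0\lambda$) to $\widetilde w_0$ applied to the canonical highest-weight generator of $D_\bb F^\sigma(\ell,\lambda,n)$, and checking via \eqref{eq:defn.rel.dem} that the Joseph-Mathieu-Polo relations are respected, since $\psi$ and conjugation by $\widetilde w_0$ agree on the relevant Cartan and root subspaces.
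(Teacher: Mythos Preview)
Your argument shares one half with the paper's: the surjection $V_\bb F^w(\Lambda)\twoheadrightarrow D_\bb F^\sigma(\ell,\lambda)^\psi$ via $\psi$ and Lemma~\ref{l:jpmrel} is exactly what the paper uses to bound $\dim D_\bb F^\sigma(\ell,\lambda)\le\dim V_\bb F^w(\Lambda)$. The difference is in the other direction. You obtain $D_\bb F^\sigma(\ell,\lambda)\twoheadrightarrow V_\bb F^w(\Lambda)^\psi$ by the symmetric $\psi$-twist, land at $V_\bb F^w(\Lambda)\cong D_\bb F^\sigma(\ell,\lambda)^\psi$, and are then forced into the extra step of proving $D_\bb F^\sigma(\ell,\lambda)^\psi\cong D_\bb F^\sigma(\ell,\lambda)$ via a lift $\widetilde w_0$ of the longest element. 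The paper instead produces a \emph{direct} surjection $D_\bb F^\sigma(\ell,\lambda)\twoheadrightarrow V_\bb F^w(\Lambda)$ with no twist: it locates the one-dimensional weight space $V_\bb F^w(\Lambda)_\mu$ at $\mu=\ell\Lambda_0+\lambda$ (using that $w_0\mu=w\Lambda$ together with $\cal W_0$-invariance of multiplicities, so the same fact $w_0=-\mathrm{id}$ you invoke enters here too), observes that a nonzero vector there is extremal in the ambient integrable module $\widehat M_\bb F(\Lambda)$ and generates $V_\bb F^w(\Lambda)$, and then reads off all the relations \eqref{eq:defn.rel.dem} from the standard extremal-vector identities $(x_\gamma^+)^{(k)}v=0$ for $k>\max\{0,-\mu(h_\gamma)\}$, $\gamma\in\hat R^+$ real. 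This buys the paper a clean finish: no lift of $w_0$ has to be built, and no compatibility of $\mathrm{Ad}(\widetilde w_0)$ with the $\lie g_\epsilon$-pieces or with divided powers has to be checked.

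Your step~4 is plausible and can be made rigorous, but as written it is only a sketch. The assertion that ``$\psi$ and conjugation by $\widetilde w_0$ agree on the relevant Cartan and root subspaces'' is only true up to nonzero scalars (which suffices for the relations, but should be said), and you still owe an argument that $\mathrm{Ad}(\widetilde w_0)$ extends to an algebra automorphism of $U_\bb F(\lie g[t]^\sigma)$ over an arbitrary $\bb F$---most naturally by passing to the Chevalley group $G_0$ over $\bb F$ and its adjoint action on $\lie g[t]^\sigma$ and hence on the hyperalgebra. None of this is deep, but the paper's route through extremal vectors in $\widehat M_\bb F(\Lambda)$ sidesteps all of it.
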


\begin{proof}
It suffices to prove the statement for $n=0$ and, thus, for simplicity, we assume that this is the case.  First, we will show that $V_\bb F^w(\Lambda)$ is a quotient of $D_\bb F^\sigma (\ell,\lambda)$.  Let $\mu = \ell \Lambda_0 + \lambda$.  Since $\widehat{M}_\bb F(\Lambda)$ is integrable and $V_\bb F^w (\Lambda)$ is a $U_\bb F (\lie g_0)$-submodule of $\widehat{M}_\bb F (\Lambda)$, it follows from Theorem~\ref{t:rha}\ref{t:rha.a} that
\[
\dim V_\bb F^w (\Lambda)_\mu
= \dim V_\bb F^w (\Lambda)_{w_0\mu}
= \dim \widehat{M}_\bb F (\Lambda)_{w\Lambda}
= \dim \widehat{M}_\bb F (\Lambda)_{\Lambda} = 1.
\]
Hence there exists a nonzero vector $v \in V_\bb F^w (\Lambda)_\mu$, which is extremal and generates $V_\bb F^w(\Lambda)$.  Since $v$ is an extremal weight vector of weight $\mu$ in $\widehat{M}_\bb F(\Lambda)$, we have, for all positive real roots $\gamma\in\hat R^+$,
\begin{equation} \label{eq:xwv?}
(x^+_\gamma)^{(k)}v = 0 \quad\text{for all}\quad k>\max\{0,-\mu(h_\gamma)\}.
\end{equation}
In particular, by taking $\gamma = \alpha + s \delta$ with $\alpha \in R_{-s}^+$ and $s \ge 0$, we obtain that
\begin{align*}
-\mu(h_\gamma) = -\delta_{s,1} \lambda(h_{\frac{\alpha}{2}, 0}) - \ell \hat r_\alpha^\vee s \le 0, \qquad
&\textup{in case $\lie g \cong A_{2n}$ and $\alpha \in 2R_{\sh}$}, \\
-\mu(h_\gamma) = -\lambda(h_{\alpha, 0}) - \ell \hat r_\alpha^\vee s \le 0, \qquad
&\textup{in every other case}.
\end{align*}
This implies that $(x_{\alpha, -s}^+ \otimes t^s)^{(k)} v = 0$ for all $k>0$, $s \ge 0$, $\alpha \in R^+_{-s}$.  Thus $U_\bb F (\lie n^+[t]^\sigma)^0 v = 0$.  Similarly, by taking $\gamma = -\alpha + s \delta$ with $\alpha \in R_{-s}^+$ and $s \ge 0$, we obtain that
\begin{align*}
-\mu(h_\gamma) = \delta_{s,1} \lambda(h_{\frac{\alpha}{2}, 0}) - \ell \hat r_\alpha^\vee s \le 0, \qquad
&\textup{in case $\lie g \cong A_{2n}$ and $\alpha \in 2R_{\sh}$}, \\
-\mu(h_\gamma) = \lambda(h_{\alpha, 0}) - \ell \hat r_\alpha^\vee s \le 0, \qquad
&\textup{in every other case}.
\end{align*}
This implies that $(x_{\alpha, -s}^- \otimes t^s)^{(k)} v = 0$ for all $k > \max\{ 0, \delta_{s,1} \lambda(h_{\frac{\alpha}{2}, 0}) - \ell \hat r_\alpha^\vee s\}$, in case $\lie g \cong A_{2n}$, $\alpha \in 2 R_{\sh}$, and implies that $(x_{\alpha, -s}^- \otimes t^s)^{(k)} v = 0$ for all $k > \max\{ 0, \lambda(h_{\alpha, 0}) - \ell \hat r_\alpha^\vee s\}$ in every other case.  These vanishing conditions for $(x_{\alpha, -s}^- \otimes t^s)^{(k)} v$ and $(x_{\alpha, -s}^+ \otimes t^s)^{(k)} v$, together with Lemma~\ref{basicreltw} imply that $U_\bb F(\lie h[t]^\sigma_+)^0v=0$.  Thus, $v$ is a generator of $V_\bb F^w (\Lambda)$ satisfying the defining relations \eqref{eq:defn.rel.dem}.  This implies that $V_\bb F^w(\Lambda)$ is a quotient of $D_\bb F^\sigma (\ell,\lambda)$.

It now suffices to show that $\dim D_\bb F^\sigma (\ell,\lambda) \le \dim V_\bb F^w (\Lambda)$.  In fact, we will show that $D'$, the pull-back of $D_\bb F^\sigma (\ell, \lambda)$ by $\psi$, is a quotient of $V_\bb F^w(\Lambda)$.  Let this time $v$ be a nonzero generator in $D_\bb F^\sigma (\ell,\lambda)_\lambda$ and $v'$ denote $v$ when regarded as an element of $D'$.  Since $U_\bb F(\lie n^+[t]^\sigma)^0 v = 0$ and $\psi \left( U_\bb F(\lie n^-[t]^\sigma)^0 \right) = U_\bb F(\lie n^+[t]^\sigma)^0$, it follows that $U_\bb F(\lie n^-[t]^\sigma)^0 v' = 0$.  Since $\psi$ restricts to an automorphism of $U_\bb F(\lie h[t]^\sigma_+)$ and $U_\bb F (\lie h[t]^\sigma_+)^0 v = 0$, then $U_\bb F (\lie h[t]^\sigma_+)^0 v' = 0$.  Moreover, since $h v = \lambda(h)v$ for all $h\in U_\bb F(\lie h)$, \eqref{e:inv+-w} implies that $hv' = -\lambda(h)v'$ for all $h \in U_\bb F(\lie h)$.  Finally, the defining relations \eqref{eq:defn.rel.dem} for $v$ and \eqref{e:inv+-def} imply that
\[
(x_{\alpha,s}^+ \otimes t^s)^{(k)} v'
= (x_{\alpha,s}^- \otimes t^s)^{(k)} v 
= 0 
\qquad \textup{for all \ $\alpha \in R_0^+$, $s \ge 0$, $k > \max\{0, \delta_{s,1}\lambda(h_{\frac{\alpha}{2},0}) - s \ell \hat r^\vee_\alpha\}$},
\]
in case $\lie g \cong A_{2n}$ and $\alpha \in 2R_{\sh}$, and imply that 
\[
(x_{\alpha,s}^+ \otimes t^s)^{(k)} v'
= (x_{\alpha,s}^- \otimes t^s)^{(k)} v 
= 0 
\qquad \textup{for all \ $\alpha \in R_0^+$, $s \ge 0$, $k > \max\{0, \lambda(h_{\alpha,0}) - s \ell \hat r^\vee_\alpha\}$},
\]
in every other case.  Thus $v'$ satisfies the defining relations of the generator of $V_\bb F^w(\Lambda)$ given in Lemma \ref{l:jpmrel}.  This shows that $D'$ is a quotient of $V_\bb F^w (\Lambda)$.  Therefore, $\dim D_\bb F^\sigma (\ell, \lambda) = \dim D' \le \dim V_\bb F^w (\Lambda)$.
\end{proof}

\section{Main results}
 
\subsection{Connection between twisted Weyl modules and twisted Demazure modules}
	
In this section we will show that almost all Weyl modules are isomorphic to certain Demazure modules. 

The following lemma will be used in Theorem~\ref{t:wd}.

\begin{lemma}\cite[Proposition 2.5.4]{JM2}.  \label{l:nilsl2t0}
	Let $m\in\mathbb Z_{\ge0}$ and let $v$ be a nonzero vector of weight $m\omega_1$ of the $U_\mathbb F(\lie{sl}_2[t])$-module $W_\mathbb F^c(m)$. Then,
	$(x_{\alpha_1}^-\otimes t^r)^{(k)}v=0$ 
	for all $r\in\mathbb Z_{\ge 0}$, $k > max\{0, m - r\}$.\hfill\qedsymbol
\end{lemma}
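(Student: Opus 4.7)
The plan is to reduce the claim to the defining relations of $W^c_\mathbb F(m)$ via systematic application of Garland's identity (Lemma~\ref{l:garland}), together with the integrability provided by Lemma~\ref{lem:WZint} and the nilpotency bound from Lemma~\ref{l:nilp}\ref{l:nilsl2t}. Write $v$ for the canonical generator of $W^c_\mathbb F(m)$, which satisfies $U_\mathbb F(\lie n^+[t])^0 v = U_\mathbb F(\lie h[t]_+)^0 v = 0$, has $\lie h_0$-weight $m\omega_1$, and obeys $(x^-_{\alpha_1})^{(k)} v = 0$ for $k > m$; set $y_i := x^-_{\alpha_1}\otimes t^i$ for $i \ge 0$. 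The easy boundary cases dispose immediately: $r = 0$ is the defining relation; $r \ge m$ is Lemma~\ref{l:nilp}\ref{l:nilsl2t} applied with $\lambda = m\omega_1$; and $k > m$ follows from the weight argument, since $y_r^{(k)} v$ has $\lie h_0$-weight $m - 2k < -m$, which is not a weight of the integrable module $W^c_\mathbb F(m)$ by Lemma~\ref{lem:WZint} and Theorem~\ref{t:rh}\ref{t:chWg}.

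For the crucial remaining range $1 \le r \le m-1$ and $m - r + 1 \le k \le m$, I would apply Lemma~\ref{l:garland} with $\alpha = \alpha_1$, $(r_G, s_G) = (0, s)$ for a positive integer $s$, and $K > m$, $\ell \ge 1$. Since $(x^-_{\alpha_1})^{(K)} v = 0$ annihilates the left-hand side, the identity collapses to the family of relations
\[
\sum_{\substack{c_0, c_1, \ldots \ge 0 \\ \sum_i c_i = n,\; \sum_i i c_i = L}} \prod_{i \ge 0} y_{si}^{(c_i)} v = 0,
\qquad s \ge 1,\; n \ge 0,\; L \ge 1,\; n + L > m.
\]
Specializing $s = 1$, $n = k$, $L = rk$ --- for which $n + L = k(r+1) > m$ holds because $k + r > m$ and $k \ge 1$ --- places the pure divided power $y_r^{(k)}$ among the summands. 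The remaining summands are products $\prod y_j^{(c_j)}$ with at least one factor $y_j^{(c_j)}$ having $j \ne r$. Auxiliary products containing a factor $y_j$ with $j \ge m$ vanish immediately via Lemma~\ref{l:nilp}\ref{l:nilsl2t} after moving that factor to the right using the commutativity of $\lie n^-[t]$; the remaining auxiliary products are handled inductively by cascading through further Garland relations with smaller $n$, producing vanishings of shorter products, until the reduction bottoms out at previously established cases or at Lemma~\ref{l:nilp}\ref{l:nilsl2t}.

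The main obstacle is the combinatorial bookkeeping of this cascade: a single Garland relation typically involves many monomials besides the target $y_r^{(k)}$, so one must carefully select the parameters $(s, n, L)$ --- potentially for several applications --- to ensure that every auxiliary monomial reduces to an already-proved case. The cascade terminates because all $t$-indices are bounded by $m$ and the number of factors in any monomial arising in the relations is at most $m$. In characteristic zero the argument can be shortened by the Chari--Pressley dimension count for $W^c(m)$, but the hyperalgebra setting compels direct manipulation of divided powers throughout, which is precisely what makes the combinatorial step delicate.
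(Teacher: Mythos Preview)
The paper does not supply a proof of this lemma; it simply cites \cite[Proposition~2.5.4]{JM2} and places a \qedsymbol. So there is no ``paper's proof'' to compare against beyond the reference itself.

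Your approach via Lemma~\ref{l:garland} is the standard one, and the boundary reductions ($r=0$, $r\ge m$, $k>m$) are all correct. The specialization $s=1$, $n=k$, $L=rk$ also correctly produces a Garland relation containing $y_r^{(k)}$, and your elimination of auxiliary terms carrying some $y_j$ with $j\ge m$ (by commuting to the right and invoking Lemma~\ref{l:nilp}\ref{l:nilsl2t}) is fine.

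The genuine gap is in the cascade. The remaining auxiliary monomials in your relation $R(k,k(r+1))$ are products $\prod_i y_i^{(c_i)}$ with $\sum_i c_i = k$: they have the \emph{same} length as the target, so ``smaller $n$'' does not apply to them directly. What actually happens---as one sees already for $m=5$, $r=2$, $k=4$---is that individual auxiliary monomials need \emph{not} vanish on $v$; one must instead combine several Garland relations $R(n',K')$ with $n'<k$ and various $K'>m$ to show that the auxiliary terms cancel in aggregate. For instance, the auxiliary $y_0 y_2^{(2)} y_4\, v$ does not die because some subproduct kills $v$; it dies only together with $y_0 y_2 y_3^{(2)}\, v$, via the relation $y_2^{(2)} y_4 v + y_2 y_3^{(2)} v = 0$ extracted from $R(3,11)$ after prior use of $R(2,9)$ and $R(2,10)$. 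Making this systematic requires a precise induction---for example on $n$ upward, establishing at each stage which length-$n$ combinations annihilate $v$---and this is exactly what \cite{JM2} carries out (in effect by exhibiting an explicit spanning set for $W^c_{\mathbb F}(m)$). You have correctly identified this as ``the main obstacle,'' but your sketch does not supply the ordering or invariant that makes the cascade terminate, and the phrase ``vanishings of shorter products'' suggests a simpler mechanism than the one actually required.
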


\begin{theorem}\label{t:wd}
Suppose $\lie g$ is of type $A_{2l-1}^{(2)}, D_{l+1}^{(2)}, E_{6}^{(2)}$ or $D_{4}^{(3)}$. Then, we have and isomorphism of $U_\mathbb F(\lie g[t]^\sigma)$-modules $$W_\mathbb F^{c,\sigma}(\lambda) \cong D_\mathbb F^\sigma(1,\lambda)$$
\end{theorem}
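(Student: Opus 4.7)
The plan is to exhibit mutually inverse surjections between $W^{c,\sigma}_\mathbb F(\lambda)$ and $D^\sigma_\mathbb F(1,\lambda)$. A direct comparison of the defining relations in Definition~\ref{d:weyl} shows that the Weyl-module relations form a subset of the Demazure relations at level $\ell=1$: the $s=0$ Demazure relation is precisely $(x^-_{\alpha,0})^{(k)}v = 0$ for $k > \lambda(h_{\alpha,0})$. Consequently the universal property of $W^{c,\sigma}_\mathbb F(\lambda)$ yields a canonical surjection $W^{c,\sigma}_\mathbb F(\lambda)\twoheadrightarrow D^\sigma_\mathbb F(1,\lambda)$. The theorem will follow once I verify that the remaining Demazure relations $(x^-_{\alpha,-s}\otimes t^s)^{(k)} v = 0$, for $s>0$ and $k > \max\{0, \lambda(h_{\alpha,0}) - \hat r^\vee_\alpha s\}$, already hold on the highest-weight generator $v \in W^{c,\sigma}_\mathbb F(\lambda)$.

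The key device is a reduction to $\lie{sl}_2[t]$-subalgebras via Lemma~\ref{isos}, which is directly available because the theorem hypothesis excludes type $A_{2n}^{(2)}$. For each $\alpha\in R_0^+$ the algebra $\lie g[t]^\sigma$ contains a subalgebra isomorphic to $\lie{sl}_2[t]$: namely $\lie{sl}_{2,\alpha}[t^m]$ when $\alpha$ is long (with the identification $x^\pm_{\alpha,0}\otimes t^{mr}\leftrightarrow x^\pm\otimes t^r$) and $\lie{sl}_{2,\alpha}[t]$ when $\alpha$ is short (preserving $t$-powers). Restricting the Weyl-module defining relations to either of these subalgebras, $v$ becomes the highest-weight generator of a quotient of the graded local Weyl module $W^c_\mathbb F(\lambda(h_{\alpha,0}))$ for $\lie{sl}_2[t]$. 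Invoking Lemma~\ref{l:nilsl2t0} in this quotient gives $(x^-\otimes t^r)^{(k)} v = 0$ for every $r\ge 0$ and every $k > \max\{0, \lambda(h_{\alpha,0}) - r\}$. Translating back through the Lemma~\ref{isos} isomorphisms yields the required Demazure vanishing $(x^-_{\alpha,-s}\otimes t^s)^{(k)} v = 0$ in $W^{c,\sigma}_\mathbb F(\lambda)$ --- for every short $\alpha$ at every $s\ge 0$, and for every long $\alpha$ at every $s$ a multiple of $m$ (the element is identically zero otherwise, since $m_\alpha = 1$ for $\sigma$-fixed roots in the non-$A_{2n}$ setting).

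For the large-$s$ regime where the Demazure threshold reaches zero, Lemma~\ref{l:nilp}\ref{l:nilsl2tsigma} separately ensures that $(x^-_{\alpha,-s}\otimes t^s) v = 0$ once $s\ge \lambda(h_{\alpha,0})$, complementing the $\lie{sl}_2[t]$-argument. The main difficulty I anticipate is the careful bookkeeping of the two distinct $t$-gradings under the two isomorphisms of Lemma~\ref{isos} --- in particular, the rescaling $s = mr$ in the long case --- and the confirmation that the vanishing ranges thereby obtained on $v$ match (or exceed) the precise Demazure cut-off $\max\{0,\lambda(h_{\alpha,0}) - \hat r^\vee_\alpha s\}$. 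Once this matching is verified, $v$ satisfies every defining relation of $D^\sigma_\mathbb F(1,\lambda)$, and its universal property produces the reverse surjection $D^\sigma_\mathbb F(1,\lambda)\twoheadrightarrow W^{c,\sigma}_\mathbb F(\lambda)$; combined with the finite-dimensionality of both sides (Corollary~\ref{cor:Weyl.Dem.fd}), this yields the desired isomorphism $W^{c,\sigma}_\mathbb F(\lambda) \cong D^\sigma_\mathbb F(1,\lambda)$.
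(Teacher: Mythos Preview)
Your proposal is correct and follows essentially the same approach as the paper: both recognize the canonical surjection $W^{c,\sigma}_\mathbb F(\lambda)\twoheadrightarrow D^\sigma_\mathbb F(1,\lambda)$ from the definitions, then use the $\lie{sl}_2[t]$-subalgebras of Lemma~\ref{isos} (case $\lie{sl}_{2,\alpha}[t^m]$ for long $\alpha$, case $\lie{sl}_{2,\alpha}[t]$ for short $\alpha$) to realize $U_\mathbb F(\lie{sl}_{2,\alpha})\cdot v$ as a quotient of $W^c_\mathbb F(\lambda(h_{\alpha,0}))$ and invoke Lemma~\ref{l:nilsl2t0} to obtain the missing Demazure relations on $v$. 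Your appeals to Lemma~\ref{l:nilp}\ref{l:nilsl2tsigma} and to Corollary~\ref{cor:Weyl.Dem.fd} are harmless but superfluous: once the extra relations are verified on $v$, the quotient map itself is already an isomorphism, so no separate reverse surjection or dimension count is needed.
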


\proof  It follows from the definitions in Section \ref{s:wdm} that the Demazure module $D_\mathbb F^\sigma(1,\lambda)$ is a quotient of the the Weyl module $W_\mathbb F^{c,\sigma}(\lambda)$. By comparing the defining relations in Lemma \ref{l:jpmrel} and Definition \ref{d:weyl}, to prove that these modules are isomorphic it is suffices to show that the generator of the Weyl module is subject to the following relations:
$$(x_{\alpha,-s}^+ \otimes t^{s})^{(k)}v=0
\quad \textup{for all} \quad \alpha\in R_0^+,\ s \ge 0, \ k > \max\{0, \lambda(h_{\alpha,0})- s  \hat r^\vee_\alpha\}$$
Notice that these relations are equivalent to:
\begin{equation}
(x_{\alpha,-j}^+ \otimes t^{ms+j})^{(k)}v=0
\end{equation}
for all $\alpha\in R_0^+,\ 0\le j\le m-1, \ s \ge 0,\  k > \begin{cases} \max\{0, \lambda(h_{\alpha,0})- s \},  \mbox{ if } \alpha \mbox{ is long } \\ \max\{0, \lambda(h_{\alpha,0})- (ms+j)\},  \mbox{ if } \alpha \mbox{ is short }. \end{cases}$

Let $\alpha\in R_0^+$ be a long root and $V=U_\mathbb F(\lie{sl}_{2,\alpha}[t^m])\cdot v\subseteq W_\mathbb F^{c,\sigma}(\lambda)$. 
Notice that $v$ is a cyclic generator for $V$ and satisfies the defining relations of of the nontwisted graded Weyl $U_\mathbb F(\lie{sl}_{2}[t])$-module $W_\mathbb F^c(\lambda(h_{\alpha,0})\omega_1)$. Thus, we conclude that $V$ is a quotient $W_\mathbb F^c(\lambda(h_{\alpha,0})\omega_1)$. In particular, by Lemma~\ref{l:nilsl2t0}, $v$ satisfies the relations 
\begin{equation}\label{sl2wdrel}
(x_{\alpha_1}^{-}\otimes t^s)^{(k)}.v=0\ \text{ for all } s\in \mathbb Z_{\geq 0} \text{ and } k>\max\{ 0 ,\lambda(h_{\alpha_1})-s\}.
\end{equation}
Now, by Lemma~\ref{isos} we obtain 
$$(x^{-}_{\alpha,0} \otimes t^{ms})^{(k)}\cdot v = 0 \text{ for all } s\in \mathbb Z_{\geq 0} \text{ and } k>\max\{ 0 ,\lambda(h_{\alpha,0})-s\}.$$
Finally, suppose $\alpha$ is a short root and consider the $U_\mathbb F(\lie{sl}_{2,\alpha}[t])$-submodule $V=U_\mathbb F(\lie{sl}_{2,\alpha}[t])\cdot v \subseteq W_\mathbb F^{c,\sigma}(\lambda)$. In a similar fashion as above we can use Lemma~\ref{isos} to conclude that $V$ is a quotient of $W_\mathbb F^c(\lambda(h_{\alpha,0})\omega_1)$ 
and, therefore, $v$ satisfies the relations in \eqref{sl2wdrel}. So, by using the isomorphism in Lemma~\ref{isos} we obtain 
\[(x_{\alpha,-j}^+ \otimes t^{ms+j})^{(k)}v=0 \text{ for all } \alpha\in R_0^+,\ 0\le j\le m-1, \ s \ge 0,\  k > \max\{0, \lambda(h_{\alpha,0})- (ms+j)\}.\qedhere\]
\endproof

\subsection{Connection between twisted and untwisted Weyl modules}

In this section we will show that the twisted Weyl modules can be realized as modules for the hyper current algebra constructed from certain untwisted Weyl modules. The version of this result for hyper loop algebra was proved in \cite{BM14} and the methods are completely different.

Suppose that $\rm{char}(\mathbb F)\ne m$. Denote by $\textrm{res}^{U_\bb F (\lie g[t])}_{U_\bb F (\lie g[t]^\sigma)} W_\bb F^c (\lambda)$ the module obtained by regarding $W_\bb F^c (\lambda)$ as a $U_\mathbb F(\lie g[t]^\sigma)$-module via restriction of the action of $U_\mathbb F(\lie g[t])$ to $U_\mathbb F(\lie g[t]^\sigma)$. 

\begin{theorem} \label{thm.2}
There is an isomorphism $\textrm{res}^{U_\bb F (\lie g[t])}_{U_\bb F (\lie g[t]^\sigma)} W_\bb F^c (\lambda) \cong W_\bb F^{c, \sigma} (\lambda)$ of $U_\bb F (\lie g[t]^\sigma)$-modules.
\end{theorem}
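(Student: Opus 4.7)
The plan is to construct a surjection $W_\bb F^{c,\sigma}(\lambda) \twoheadrightarrow \textrm{res}^{U_\bb F(\lie g[t])}_{U_\bb F(\lie g[t]^\sigma)} W_\bb F^c(\lambda)$ by verifying that the canonical generator $v_\lambda$ of $W_\bb F^c(\lambda)$ satisfies the twisted Weyl defining relations, and then to upgrade this surjection to an isomorphism by reducing to the characteristic-zero case via an integral-form argument. Throughout, $\lambda \in P_0^+$ on the untwisted side is identified with the appropriate $\sigma$-invariant dominant weight of $\lie g$.

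First I would check the defining relations of Definition~\ref{d:weyl} for $v_\lambda$. The vanishings $U_\bb F(\lie n^+[t]^\sigma)^0 v_\lambda = 0$ and $U_\bb F(\lie h[t]_+^\sigma)^0 v_\lambda = 0$ follow from the inclusions $\lie n^+[t]^\sigma \subseteq \lie n^+[t]$ and $\lie h[t]_+^\sigma \subseteq \lie h[t]_+$, together with \eqref{e:tLvsL}, which expresses each $\Lambda^\sigma_{\mu,r}$ as a polynomial in the $\Lambda_{\alpha,s}$ that already annihilate $v_\lambda$. The weight equation $hv_\lambda = \lambda(h)v_\lambda$ for $h \in U_\bb F(\lie h_0)$ reduces, via the formula $h_{i,0} = \sum_j h_{\sigma^j(\alpha_{o(i)})}$ (and its $A_{2n}$ variant) combined with the Vandermonde identity for divided powers of commuting elements, to the $\sigma$-invariance of $\lambda$.

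The key relation is $(x_{\alpha,0}^-)^{(k)} v_\lambda = 0$ for $\alpha \in R_0^+$ and $k > \lambda(h_{\alpha,0})$. When $\lie g \not\cong A_{2n}$, the summands $x_{\sigma^j(\alpha)}^-$ in $x_{\alpha,0}^- = \sum_{j=0}^{m_\alpha-1} x_{\sigma^j(\alpha)}^-$ lie in pairwise commuting root spaces (since $\sigma^j(\alpha)+\sigma^k(\alpha)$ is not a root for $j\neq k$), so the divided-power multinomial identity gives
\begin{equation*}
(x_{\alpha,0}^-)^{(k)} = \sum_{k_0+\cdots+k_{m_\alpha-1}=k} \prod_{j=0}^{m_\alpha-1} (x_{\sigma^j(\alpha)}^-)^{(k_j)}.
\end{equation*}
Since $\sigma$-invariance of $\lambda$ yields $\lambda(h_{\alpha,0}) = m_\alpha \lambda(h_\alpha)$, the pigeonhole principle forces some $k_j > \lambda(h_\alpha) = \lambda(h_{\sigma^j(\alpha)})$ in every summand, so that factor annihilates $v_\lambda$ by the defining relation of the untwisted Weyl module. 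The $A_{2n}$ case is handled analogously using the explicit formulas from Section~\ref{s:chevalley}, where the hypothesis $\mathrm{char}(\bb F) \neq 2$ is used to control the coefficients $\sqrt{2}$ and $2$ that appear in $x_{\alpha,\epsilon}^\pm$ and $h_{\alpha,\epsilon}$.

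To promote the resulting surjection $W_\bb F^{c,\sigma}(\lambda) \twoheadrightarrow U_\bb F(\lie g[t]^\sigma) v_\lambda \subseteq W_\bb F^c(\lambda)$ to an isomorphism, I would pass to integral forms: both $W_\bb Z^{c,\sigma}(\lambda)$ and $W_\bb Z^c(\lambda)$ are finitely generated (Proposition~\ref{prop:WZfg} and its classical untwisted counterpart), and the map is known to be an isomorphism over $\bb C$ by \cite[Theorem~6.0.2]{FK13}. A rank-plus-flatness argument then transports the isomorphism to arbitrary $\bb F$. The main obstacle I anticipate is precisely this $\bb Z$-level step: showing simultaneously that $v_\lambda$ generates all of $W_\bb Z^c(\lambda)$ as a $U_\bb Z(\lie g[t]^\sigma)$-module and that the kernel of the surjection is torsion-free (hence zero by the characteristic-zero equality of ranks). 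This requires careful tracking of the integral Chevalley basis of $\lie g[t]^\sigma$ inside that of $\lie g[t]$, and it is here that the subtle scaling in the $A_{2n}$ case becomes the decisive issue.
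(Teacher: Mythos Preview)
Your verification of the defining relations essentially matches the paper's argument: the paper also checks $U_\bb F(\lie n^+[t]^\sigma)^0 v = U_\bb F(\lie h[t]^\sigma_+)^0 v = 0$ from the inclusions, identifies $\lambda(h_{\alpha,0})$ case by case, and for $\lie g\not\cong A_{2n}$ uses exactly the commuting-summands multinomial expansion plus pigeonhole that you describe. One point where you are too quick is the $A_{2n}$ short-root case: there $x_\alpha^-$ and $x_{\sigma(\alpha)}^-$ do \emph{not} commute (indeed $\alpha+\sigma(\alpha)\in R$), so ``analogously'' hides real content. The paper instead writes out an explicit expansion
\[
(x_\alpha^-+x_{\sigma(\alpha)}^-)^{(k)}=\sum_{\substack{0\le n\le k\\ n\equiv k\ (2)}}\left(-\tfrac12 x_{\alpha+\sigma(\alpha)}^-\right)^{((k-n)/2)}\sum_{k_1+k_2=n}(x_\alpha^-)^{(k_1)}(x_{\sigma(\alpha)}^-)^{(k_2)}
\]
and kills the extra terms with the weight argument $\lambda-2\alpha-2\sigma(\alpha)\notin\wt(W_\bb F^c(\lambda))$; the hypothesis ${\rm char}(\bb F)\ne 2$ is used to make sense of the coefficient $\tfrac12$.

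Where your proposal genuinely diverges from the paper is in upgrading the surjection $W_\bb F^{c,\sigma}(\lambda)\twoheadrightarrow U_\bb F(\lie g[t]^\sigma)v_\lambda$ to an isomorphism onto the full restriction. The paper's written proof in fact \emph{also} establishes only this surjection onto the cyclic submodule; the assertion that this submodule equals all of $W_\bb F^c(\lambda)$ is not argued directly. The intended completion is not your integral-form reduction but rather a dimension comparison resting on the Demazure identifications already proved: Theorem~\ref{t:wd} gives $W_\bb F^{c,\sigma}(\lambda)\cong D_\bb F^\sigma(1,\lambda)$, and since the ambient $\lie g$ is simply laced, \cite[Theorem~1.5.2]{BMM15} gives $W_\bb F^c(\lambda)\cong D_\bb F(1,\lambda)$; the Demazure character formula then makes both dimensions field-independent, and the characteristic-zero equality from \cite{FK13} finishes. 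Your integral-form route would work too, but the obstacles you flag (generation of $W_\bb Z^c(\lambda)$ under $U_\bb Z(\lie g[t]^\sigma)$ and torsion-freeness of the kernel) are precisely what the Demazure dimension argument bypasses, so the paper's implicit approach is the shorter one.
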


\begin{proof}
Let $v\in W_\bb F^c (\lambda)$ be a cyclic generator and $W=U_\bb F (\lie g[t]^\sigma)v\subseteq U_\bb F (\lie g[t])v=W_\bb F^c (\lambda)$. Under these assumptions we have $W=\textrm{res}^{U_\bb F (\lie g[t])}_{U_\bb F (\lie g[t]^\sigma)} W_\bb F^c (\lambda)$. We will show that $W$ is a quotient of $W_\bb F^{c, \sigma} (\lambda)$. 

Firstly, from the defining relations of $W_\bb F^c (\lambda)$ and the construction of $\cal C^\sigma(O)$, we easily conclude that 
$$U_\bb F (\lie n^+[t]^\sigma)^0v=U_\bb F (\lie h[t]_+^\sigma)^0v=0.$$
It remains to show that 
$$h v= \lambda(h)v \quad \textup{ and } \quad (x_{\alpha,0}^- \otimes 1)^{(k)}v=0,$$
for all $h \in U_\bb F (\lie h_0)$, $\alpha \in R_0^+$, and $k > \lambda( h_{\alpha,0} )$. From Section \ref{s:chevalley}, we have
$$\lambda(h_{\alpha,0})=\begin{cases} 
\lambda(h_{\alpha}), & \text{ if } \lie g \text{ is not of type } A_{2n} \text{ and } \alpha=\sigma(\alpha),\\
\lambda(h_{\alpha}+h_{\sigma(\alpha)}), & \text{ if } \lie g \text{ is not of type } A_{2n} \text{ and } \alpha\ne\sigma(\alpha),\\
\lambda(2(h_{\alpha}+h_{\sigma(\alpha)})), & \text{ if } \lie g \text{ is of type } A_{2n} \text{ and } \alpha\mid_{\lie h_0}\in R_{\sf sh},
\end{cases}$$
and
$$h_{\alpha,0}v=\begin{cases} 
h_{\alpha}v, & \text{ if } \lie g \text{ is not of type } A_{2n} \text{ and } \alpha=\sigma(\alpha),\\
(h_{\alpha}+h_{\sigma(\alpha)})v, & \text{ if } \lie g \text{ is not of type } A_{2n} \text{ and } \alpha\ne\sigma(\alpha),\\
2(h_{\alpha}+h_{\sigma(\alpha)})v, & \text{ if } \lie g \text{ is of type } A_{2n} \text{ and } \alpha\mid_{\lie h_0}\in R_{\sf sh},
\end{cases}$$
which leads to $h v= \lambda(h)v$ for all $h \in U_\bb F (\lie h_0)$ due to the defining relations of $W_\bb F^c (\lambda)$. Also from Section \ref{s:chevalley} we have 
$$(x_{\alpha,0}^- \otimes 1)^{(k)}v=\begin{cases} 
(x_{\alpha}^- \otimes 1)^{(k)}v, & \text{ if } \lie g \text{ is not of type } A_{2n} \text{ and } \alpha=\sigma(\alpha),\\
((x_{\alpha}^-+x_{\sigma(\alpha)}^-) \otimes 1)^{(k)}v, & \text{ if } \lie g \text{ is not of type } A_{2n} \text{ and } \alpha\ne\sigma(\alpha),\\
(\sqrt{2}(x_{\alpha}^-+x_{\sigma(\alpha)}^-) \otimes 1)^{(k)}v, & \text{ if } \lie g \text{ is of type } A_{2n} \text{ and } \alpha\mid_{\lie h_0}\in R_{\sf sh}\end{cases}$$
and  supposing $k>\lambda(h_{\alpha,0})$, from the defining relations of $W_\bb F^c (\lambda)$, we conclude that $(x_{\alpha,0}^- \otimes 1)^{(k)}v=0$ 
for all these cases, since the first case is direct, the second case follows by taking the binomial expansion $$(x_{\alpha}^-+x_{\sigma(\alpha)}^-)^{(k)}=\sum_{\substack{k_1,k_2\in\mathbb Z_{\ge0}\\ k_1+k_2=k}} (x_{\alpha}^-)^{(k_1)}(x_{\sigma(\alpha)}^-)^{(k_2)}$$ where we observe that either $k_1>\lambda(h_\alpha)$ or $k_2>\lambda(h_{\sigma(\alpha)})$ and $x_{\alpha}^-$ and $x_{\sigma(\alpha)}^-$ commutes, and the third case follows from the expansion  
$$(x_{\alpha}^-+x_{\sigma(\alpha)}^-)^{(k)}=\sum_{\substack{0\le n \le k\\ n\equiv_2 k }} (-\frac{1}{2}x_{\alpha+\sigma(\alpha)}^-)^{(\frac{k-n}{2})} \sum_{\substack{k_1,k_2\in\mathbb Z_{\ge0}\\ k_1+k_2=n}} (x_{\alpha}^-)^{(k_1)}(x_{\sigma(\alpha)}^-)^{(k_2)}$$
and the fact that $\lambda - 2\alpha - 2\sigma(\alpha) \notin \wt(W_\bb F^c (\lambda))$ for any $\alpha \in R_0^+$.
\end{proof}


\end{document}